\documentclass[12pt]{article}
\usepackage{a4wide}
\usepackage{amsthm}
\usepackage{amsfonts}
\usepackage{amssymb}
\usepackage{amsmath}
\usepackage{cite}
\usepackage{epsfig}
\usepackage{stmaryrd}
\usepackage{xcolor}
\usepackage{thm-restate}
\usepackage[hidelinks]{hyperref}
\newtheorem{theorem}{Theorem}
\newtheorem{lemma}[theorem]{Lemma}

\newcommand\HH{{\cal H}}
\newcommand\PP{{\cal P}}

\newcommand\NN{{\mathbb N}}

\newcommand\RR{{\mathbb R}}

\DeclareTextCompositeCommand{\v}{OT1}{l}{l\nobreak\hspace{-.1em}'}
\DeclareTextCompositeCommand{\v}{OT1}{t}{t\nobreak\hspace{-.1em}'\nobreak\hspace{-.15em}}

\begin{document}
\title{Solution of uniform Tur\'an's Tetrahedron Problem\thanks{The first, second and fifth authors were supported by the Alexander von Humboldt Foundation in the framework of the Alexander von Humboldt Professorship of the second author endowed by the Federal Ministry of Education and Research.}}
\author{Bart\l{}omiej Kielak\thanks{Institute of Mathematics, Leipzig University, Augustusplatz 10, 04109 Leipzig, Germany. E-mail: {\tt \{bartlomiej.kielak,xichao.shu\}@uni-leipzig.de}.}\and
        \newcounter{lth}
        \setcounter{lth}{2}
        Daniel Kr\'al'\thanks{Institute of Mathematics, Leipzig University, Augustusplatz 10, 04109 Leipzig, and Max Planck Institute for Mathematics in the Sciences, Inselstra{\ss}e 22, 04103 Leipzig, Germany. E-mail: {\tt daniel.kral@uni-leipzig.de}.}\and
	Ander Lamaison\thanks{Universidad P\'ublica de Navarra, Edificio Las Encinas, Campus de Arrosad\'ia, 31006 Pamplona, Spain. E-mail: {\tt ander.lamaison@unavarra.es}.}\and
	Hong Liu\thanks{Extremal Combinatorics and Probability Group, Institute for Basic Science, Daejeon, South Korea. Email: \texttt{hongliu@ibs.re.kr}. Supported by the Institute for Basic Science under grant IBS-R029-C4.}\and
        Xichao Shu$^\fnsymbol{lth}$\and
	Zhuo Wu\thanks{Departament de Matem\`atiques, Universitat Polit\`ecnica de Catalunya (UPC),
Carrer de Pau Gargallo 14, 08028 Barcelona, Spain. The last author acknowledges the bilateral AEI+DFG research project PCI2024-155080-2: SRC-ExCo – Structure, Randomness and Computational Methods in Extremal Combinatorics, and the PID2023-147202NB-I00 (COCOA: COntemporary COmbinatorics and its Applications), 
all funded by MICIU/AEI/10.13039/501100011033. Email: \texttt{zhuo.wu@upc.edu}} }

\date{}

\maketitle

\begin{abstract}
Tur\'an's Tetrahedron Problem asks to determine the Tur\'an density of the complete hypergraph $K_4^{(3)}$ (tetrahedron).
This problem, posed by Tur\'an in 1941, is one of the most famous problems in extremal combinatorics and
its solution would attract \$500 prize from Erd\H os.
In the 1980s, Erd\H{o}s and S\'os asked to determine Tur\'an densities of
$K_4^{(3)-}$ (broken tetrahedron) and $K_4^{(3)}$ (tetrahedron)
when edges are constrained to be uniformly distributed in the host hypergraph.
The presumably easier case of the broken tetrahedron
was solved by Glebov, Kr\'al' and Volec [Israel J. Math. 211 (2016), 349--366] and
Reiher, R\"odl and Schacht [J. Eur. Math. Soc. 20 (2018), 1139--1159].
We solve the tetrahedron case
by proving that
the uniform Tur\'an density of $K_4^{(3)}$ is equal to $1/2$;
this confirms that R\"odl's lower bound construction from 1986 is optimal.
\end{abstract}

\maketitle

\section{Introduction}
\label{sec:intro}

Tur\'an problems are among the most fundamental problems in extremal combinatorics;
they ask to determine the minimum density, called the \emph{Tur\'an density}, that guarantees the existence of a given substructure.
Specifically, the 85-year-old \emph{Tur\'an's Tetrahedron Problem},
which asks to determine the Tur\'an density of the complete $3$-uniform $4$-vertex hypergraph $K_4^{(3)}$,
is generally considered to be one of the most intriguing problems in the area.
While Tur\'an densities of graphs are well understood due to the classical work of Erd\H os and Stone~\cite{ErdS46} from the 1940s (see also~\cite{ErdS66}),
Tur\'an densities of hypergraphs have turned out to be extremely challenging:
Erd\H os offered \$500 for determining the Tur\'an density of
any single complete $k$-uniform hypergraph with at least $k+1$ vertices and
offered \$1\,000 for determining the Tur\'an density of all complete $k$-uniform hypergraphs for $k\ge 3$.
Neither of the prizes has been claimed.
We refer to the surveys by Keevash~\cite{Kee11} and Sidorenko~\cite{Sid95} for results on Tur\'an densities of hypergraphs,
including progress towards Tur\'an's Tetrahedron Problem.

Almost all constructions that are known or conjectured to be extremal in the Tur\'an density setting
have edges distributed in a highly non-uniform way.
This led Erd\H os and S\'os~\cite{ErdS82,Erd90} to introduce the notion of the \emph{uniform Tur\'an density},
which is the minimum density that guarantees the existence of a given hypergraph
under an additional assumption that the edges are distributed uniformly in the host hypergraph.
Formally, an $n$-vertex hypergraph is $(d,\varepsilon)$-uniformly dense 
if every subset of at least $\varepsilon n$ vertices has density at least $d$; for brevity, we will simply call such a hypergraph $(d,\varepsilon)$-dense.
The uniform Tur\'an density of a $k$-uniform hypergraph $H$ is the infimum over all $d$ such that
there exists $\varepsilon>0$ such that every sufficiently large $(d,\varepsilon)$-dense hypergraph contains $H$ as a subhypergraph.

In the 1980s,
Erd\H os and S\'os asked about determining the uniform Tur\'an densities of
the $3$-uniform hypergraphs $K_4^{(3)-}$ (broken tetrahedron) and $K_4^{(3)}$ (tetrahedron).
Until a decade ago, there was very little progress on the uniform Tur\'an densities of hypergraphs.
The uniform Tur\'an density of $K_4^{(3)-}$ was determined by Glebov, Volec and the second author~\cite{GleKV16}
using the flag algebra method of Razborov~\cite{Raz07}, and
a direct combinatorial argument using the hypergraph regularity method
was given by Reiher, R\"odl and Schacht~\cite{ReiRS18a}.
This application of the hypergraph regularity catalyzed progress in the area and many results quickly followed.
In particular,
Reiher, R\"odl and Schacht~\cite{ReiRS18} classified $3$-uniform hypergraphs with uniform Tur\'an density equal to $0$, and
showed that the uniform Tur\'an density of every $3$-uniform hypergraph is at least $1/27$ unless it is equal to $0$;
the value of $1/27$ was shown to be tight in~\cite{GarKL24}.
The uniform Tur\'an density of non-trivial tight cycles was determined in~\cite{BucCKMM23}, and
that of generalized stars in~\cite{LamW24}, largely extending the result concerning $K_4^{(3)-}$.
Additional recent results on the uniform Tur\'an density of $3$-uniform hypergraphs
can be found in~\cite{CheSXX,GarIKLXX,KinPSS25,KinSS24,LiLWZ23}.
For a broader overview, we refer the reader to the survey by Reiher~\cite{Rei20} on the topic,
which also discusses results on stronger notions of uniform density such as those considered in~\cite{ReiRS16,ReiRS18b,ReiRS18c}.

Despite the significant insights into uniform Tur\'an densities of hypergraphs that have been recently gained,
determining the uniform Tur\'an density of $K_4^{(3)}$,
which has generally been considered to be the most central open problem concerning the notion,
remained elusive.
A particular reason for the substantial difference between the broken tetrahedron and the tetrahedron cases
is the presence of triangles in the link graphs of vertices that may form edges of the host hypergraph (abstractly speaking,
the difference is that
when $K_4^{(3)-}$ is viewed as a $2$-dimensional simplicial complex, it  is contractible while $K_4^{(3)}$ is not).
In a certain sense,
the hypergraph regularity permits exploiting the absence of such triangles to reduce the analysis 
from the hypergraph setting to graphs and this is not possible in the case of the tetrahedron.

A non-trivial lower bound on the uniform Tur\'an density of $K_4^{(3)}$
was provided by R\"odl~\cite{Rod86} in 1986 (this lower bound construction
is presented at the beginning of Section~\ref{sec:palette}), and
it was generally believed that this construction is optimal.
Our main result solves the problem posed by Erd\H os and S\'os on determining the uniform Tur\'an density of $K_4^{(3)}$ and
confirms that R\"odl's 40-year-old lower bound construction is indeed optimal.

\begin{restatable}[Uniform Tur\'an density of the Tetrahedron]{theorem}{thmmain}
\label{thm:main}
The uniform Tur\'an density of $K_4^{(3)}$ is equal to $1/2$.
\end{restatable}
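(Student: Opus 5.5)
The plan is to prove the two inequalities separately. The lower bound comes from R\"odl's construction, and the upper bound uses the palette framework of Reiher, R\"odl and Schacht obtained through hypergraph regularity.

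For the lower bound, I would recall R\"odl's construction. Take a uniformly random tournament on $[n]$. Colour each pair $ij$ randomly red or blue, or equivalently use two random tournaments. Declare a triple an edge according to a rule that sees each triple with probability $1/2$ and ensures that no four vertices span all four triples. For example, with a single random tournament one takes the triples that span a cyclically oriented triangle, which gives density $1/4$. To reach $1/2$ one combines this with an additional random $2$-colouring of the pairs, and the rule is checked to be $K_4^{(3)}$-free by a parity or orientation argument on the four triangles of a $K_4$. Standard concentration then shows the hypergraph is $(1/2-o(1),\varepsilon)$-dense for every $\varepsilon>0$. I expect to phrase this as a palette, namely a set of admissible colour triples on ordered pairs, whose density is $1/2$ and which admits no ``tetrahedral'' colouring.

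For the upper bound, I would use the reduction of Reiher--R\"odl--Schacht, refined in~\cite{LamW24,GarKL24}. Applying hypergraph regularity to a $(1/2+\delta,\varepsilon)$-dense hypergraph and passing to a reduced structure, one obtains the following. The uniform Tur\'an density of $K_4^{(3)}$ equals the supremum of densities of finite palettes $\PP$ (sets of colour patterns $(a,b,c)$ assigned to pairs $ij$, $ik$, $jk$ for $i<j<k$) that do not \emph{realize} $K_4^{(3)}$. Here realizing means that there is a colouring of the six pairs of an ordered $4$-set in which all four triples receive admissible patterns. The density of a palette is measured via reduced hypergraphs over $\NN$. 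Using this equivalence, the task becomes purely combinatorial. I must show that every palette $\PP$ with density greater than $1/2$ realizes $K_4^{(3)}$; equivalently, every $K_4^{(3)}$-free palette has a weighting of colours certifying density at most $1/2$.

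The main obstacle is this palette statement, and I would attack it as follows. Given a large reduced hypergraph with index set $[N]$ and colour classes on pairs, apply Ramsey's theorem to assume the palette pattern is homogeneous across all triples $i<j<k$. Then use a probabilistic argument: choose for each index a random colour at each pair slot, and count the expected number of admissible triples. The first step is an averaging lemma showing that, for $K_4^{(3)}$-free $\PP$, the triples $(a,b,c)\in\PP$ and the triples $(a',b',c')$ obtained by ``rotating'' through a fourth vertex cannot both be frequent. Concretely, I would consider the four triples of a tetrahedron. The constraint that the shared pair colours coincide gives a system where the pattern on $\{1,2,3\}$ together with those on $\{1,2,4\}$ and $\{1,3,4\}$ forces the pattern on $\{2,3,4\}$ to be outside $\PP$. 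Pairing each admissible configuration on $\{1,2,3\}$ with a forbidden one should then yield the density bound $1/2$. I expect this injection or pairing argument, which resembles the orientation parity in R\"odl's construction, to be the delicate part, possibly requiring a linear-programming or flag-type weighting over the colours of the middle pair.
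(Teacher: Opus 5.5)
There is a genuine gap. Your reduction of the upper bound to palettes is correct, and it is the route the paper takes (Theorem~\ref{thm:palette}). After that, however, there is no argument. Showing that every palette of density above $1/2$ colours $K_4^{(3)}$ \emph{is} the theorem, and recasting it as a colour weighting just restates it for the Lagrangian.

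The Ramsey step adds nothing, because it is already built into the palette characterization. The random colouring and pairing you describe, made precise, amounts to the following. Colour the six pairs of an ordered $4$-set uniformly at random. Each of the four triples is then feasible with probability $d(\PP)$, and at most three can be feasible, so $d(\PP)\le 3/4$. That is nowhere near $1/2$.

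No finer local injection can close the gap either. The only obstruction living on three colours is that $xyz,xzy,yzx,zyx$ are not all feasible. A single colour triple can carry four of its six orderings without colouring $K_4^{(3)}$ (types A and B), so locally the density can be $2/3$. Flag-algebra and regularity arguments of the kind you allude to stop at about $0.529$. The value $1/2$ is forced only by global structure, and you give no mechanism that captures it.

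The paper supplies that mechanism in the following steps.
\begin{itemize}
\item It symmetrizes to a simple palette in which every colour lies in at least $(\frac12+\delta)3n^2$ feasible triples (Lemma~\ref{lm:palette}), and classifies colour triples into the ten types A--J.
\item Its new reduction says a small motif cannot occur if \emph{every} extension of it by $k$ further colours already colours $K_4^{(3)}$. Here the extensions considered are those in which the triples with one motif colour and two new colours have (weighted) average weight above $3$. The degree condition guarantees such an extension around any copy (Lemmas~\ref{lm:avg}, \ref{lm:excl} and~\ref{lm:excl-BE}), and the finitely many extensions are checked by computer.
\item This eliminates types B and E, and every Fano plane whose triples have types A, C or D. Hence $a+c+d\le 3/4+o(1)$ by de Caen--F\"uredi.
\item Colouring pairs red/blue according to bases and legs of type-A triples, and excluding further motifs, feeds a flag-algebra computation giving $a-2j\le 1/4+o(1)$.
\item Adding these two inequalities to twice \eqref{eq:sum1} contradicts \eqref{eq:low3b}.
\end{itemize}

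Your lower-bound sketch is also garbled. R\"odl's construction orders the vertices and colours pairs red/blue at random. It takes $\{v_i,v_j,v_k\}$, $i<j<k$, as an edge iff $v_iv_j$ and $v_iv_k$ receive different colours. $K_4^{(3)}$-freeness then follows by pigeonhole on the three pairs at the smallest vertex, not by a parity argument. The cyclic-triangle tournament construction you mention has density $1/4$ and is the one for $K_4^{(3)-}$.
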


The rest of the paper is devoted to proving Theorem~\ref{thm:main}.
We provide a detailed overview of the proof in Section~\ref{sec:overview};
we hope that this overview can be accessible to any combinatorialist
while covering all key steps of the proof to the point that the proof becomes reconstructible.
In the next two paragraphs,
we briefly mention the main techniques used in the proof of Theorem~\ref{thm:main}.

The two proofs of the broken tetrahedron case employ different arguments:
the proof given in~\cite{GleKV16} is based on Razborov's flag algebra method~\cite{Raz07},
which revolutionized extremal combinatorics (see e.g.~\cite{BalCL22,FalPVV23,HatHKNR12,HatHKNR13,MubR21,Raz08,Raz10}), and
the proof given in~\cite{ReiRS18} is based on the hypergraph regularity,
one of the cornerstone techniques of modern hypergraph theory.
We remark that
these arguments yield an upper bound of $0.529$ on the uniform Tur\'an density of the tetrahedron~\cite[Table 1]{BalCL22s}.
The hypergraph regularity argument of Reiher, R\"odl and Schacht
paved the way for a general framework for analyzing uniform Tur\'an densities,
which was formalized in~\cite{Rei20} and
which yielded a long line of results that were mentioned earlier.
This framework ultimately led to a classification of uniform Tur\'an densities by palette constructions~\cite{Lam24}.

The proof of Theorem~\ref{thm:main} utilizes all these developments,
which alone however turned insufficient to resolve the tetrahedron case, and
a new key ingredient, the reduction technique described in Section~\ref{sec:excl}.
The technique,
which we believe to be new in this context,
yields that certain patterns cannot appear in extremal constructions.
However, instead of excluding the pattern directly (which seems impossible),
we exclude every dense configuration containing the pattern, which in turn excludes the pattern.
While it may seem puzzling at first sight that such a type of argument could work in principle,
it turned out to be surprisingly powerful and
led to breaking the longstanding resistance of the uniform Tur\'an Tetrahedron problem.

\section{Palettes}
\label{sec:palette}

A key concept in our argument is that of a palette, which we introduce in this section.
To motivate this concept,
we first recall R\"odl's lower bound construction on the uniform Tur\'an density of $K_4^{(3)}$ from~\cite{Rod86}.
The construction proceeds as follows:
consider $n$ vertices $v_1,\ldots,v_n$,
color each pair independently red or blue, each with probability $1/2$, and
include $\{v_i,v_j,v_k\}$, $i<j<k$, as an edge
if the colors of the pairs $v_iv_j$ and $v_iv_k$ differ.
The resulting hypergraph does not contain $K_4^{(3)}$ as a subhypergraph and
for every $d\in (0,1/2)$ and $\varepsilon>0$
it is $(d,\varepsilon)$-dense with positive probability when $n$ is sufficiently large.
This shows that the uniform Tur\'an density of $K_4^{(3)}$ is at least $1/2$.

We now present a generalization of this construction,
which nowadays forms a standard lower bound framework known as palette constructions.
A \emph{palette} $\PP$ is a pair $(\Gamma,T)$ with $T\subseteq \Gamma^3$;
we refer to the elements of $\Gamma$ as \emph{colors} and the elements of $T$ as (feasible) \emph{triples}.
For brevity, we write $xyz$ for the ordered triple $(x,y,z)$.
If $xyz$ is a feasible triple,
we refer to $x$ as the \emph{left} color, $y$ as the \emph{top} color and $z$ as the \emph{right} color of the triple.
The \emph{density} of a palette $\PP=(\Gamma,T)$, which is denoted by $d(\PP)$, is $|T|/|\Gamma|^3$.
We will see that the palette corresponding to R\"odl's construction is the palette $\PP_4=(\Gamma,T)$
where $\Gamma=\{b,r\}$ and $T=\{brb,brr,rbb,rbr\}$;
note that $d(\PP_4)=1/2$.

We say that an $n$-vertex hypergraph $H$ is \emph{$\PP$-colorable}
if its vertices can be ordered as $v_1,\ldots,v_n$ and
there exists an assignment $f:\binom{V(H)}{2}\to\Gamma$ of colors contained in $\Gamma$ to the (unordered) pairs of vertices of $H$ such that
it holds \[f(\{v_i,v_j\})f(\{v_i,v_k\})f(\{v_j,v_k\})\in T\]
for every edge $\{v_i,v_j,v_k\}$ of $H$ with $1\le i<j<k\le n$.
Note that $K_4^{(3)}$ is not $\PP_4$-colorable.
More generally, $K_4^{(3)}$ is $\PP$-colorable for a palette $\PP=(\Gamma,T)$ if and only if
there exist not necessarily distinct colors $u$, $v$, $w$, $x$, $y$ and $z$ in $\Gamma$ such that
$uxv$, $uzy$, $xzw$ and $vyw$ are feasible triples (see Figure~\ref{fig:K4}).

\begin{figure}
\begin{center}
\epsfbox{turank4-13.mps}
\end{center}
\caption{An assignment of colors $u$, $v$, $w$, $x$, $y$ and $z$ to the pairs of vertices witnessing that
         $K_4^{(3)}$ is $\PP$-colorable if the palette $\PP$ contains the triples $uxv$, $uzy$, $xzw$ and $vyw$.}
\label{fig:K4}
\end{figure}

Now, we consider the following random construction of an $n$-vertex hypergraph $H_n$ with vertices $v_1,\ldots,v_n$:
assign a color $c\in\Gamma$ to each pair $v_iv_j$, $1\le i<j\le n$, uniformly at random and
include $\{v_i,v_j,v_k\}$, $1\le i<j<k\le n$, as an edge if $xyz\in T$
where $x$ is the color assigned to $v_iv_j$, $y$ is the color assigned to $v_iv_k$ and $z$ is the color assigned to $v_jv_k$.
Observe that $H_n$ is $\PP$-colorable and so it does not contain any hypergraph that is not $\PP$-colorable.
Using standard concentration inequalities,
it is possible to show for every $d\in\left(0,d(\PP)\right)$ and $\varepsilon>0$ that
$H_n$ is $(d,\varepsilon)$-dense with positive probability when $n$ is sufficiently large.
It follows that if a hypergraph $H$ is not $\PP$-colorable,
then the uniform Tur\'an density of $H$ is at least $d(\PP)$.

The third author~\cite{Lam24} showed that palette constructions always
provide tight lower bounds on the uniform Tur\'an density.

\begin{theorem}
\label{thm:palette}
The uniform Tur\'an density of a $3$-uniform hypergraph $H$
is equal to the supremum of the densities of the palettes $\PP$ such that $H$ is not $\PP$-colorable.
\end{theorem}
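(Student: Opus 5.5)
The lower bound is already established above: if $H$ is not $\PP$-colorable, the random $\PP$-colored hypergraphs $H_n$ are $H$-free and $(d,\varepsilon)$-dense for every $d<d(\PP)$. So the uniform Tur\'an density of $H$ is at least the supremum in question. The plan for the converse is to show the following. If $d$ exceeds the supremum, then every sufficiently large $(d,\varepsilon)$-dense hypergraph $G$ yields a palette $\PP$ with $d(\PP)$ close to $d$ that encodes how $G$ is structured; and if $H$ were $\PP$-colorable, a counting lemma would embed $H$ into $G$. I would carry this out in the framework of reduced hypergraphs of Reiher, R\"odl and Schacht, as formalized in~\cite{Rei20}.

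Fix $d$ larger than the supremum and $\delta>0$ small. First, apply the hypergraph regularity lemma to $G$. Restrict to a large index set $I$ of vertex clusters $V_1,\ldots,V_m$. For each pair $i<j$ in $I$, the complete bipartite graph between $V_i$ and $V_j$ is partitioned into $N$ regular bipartite graphs; after refining, I assume the number $N$ is the same for all pairs. I label these graphs arbitrarily by $\Gamma=[N]$. For each triple $i<j<k$ and each $(x,y,z)\in\Gamma^3$, I declare $xyz$ \emph{present} if $G$ is dense and regular on the triad formed by the $x$-th graph on $ij$, the $y$-th on $ik$ and the $z$-th on $jk$. This defines a set $T_{ijk}\subseteq\Gamma^3$. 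The uniform density of $G$ transfers to the reduced structure in the following averaged form: for every large subset $J\subseteq I$, the average of $|T_{ijk}|/N^3$ over triples of $J$ is at least $d-\delta$. This uses the standard argument that sets built from randomly selected cluster parts have density at least $d$.

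Second, I color each triple $\{i<j<k\}$ of $I$ by the set $T_{ijk}$. This uses at most $2^{N^3}$ colors, where $N$ is bounded in terms of $\delta$ only. Let $h=|V(H)|$. Provided $|I|$ is large enough, Ramsey's theorem for $3$-uniform hypergraphs gives a subset $J\subseteq I$ of size at least $\max(h, L)$ on which $T_{ijk}=T$ is constant; here $L$ is chosen so that the averaged density bound above applies to $J$. The averaged density bound applied to $J$ then gives $|T|/N^3\ge d-\delta$. Thus $\PP=(\Gamma,T)$ is a palette of density at least $d-\delta$, which exceeds the supremum once $\delta$ is small. Hence $H$ is $\PP$-colorable.

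Third, I embed $H$ into $G$. Take the ordering $v_1,\ldots,v_h$ and the coloring $f$ witnessing that $H$ is $\PP$-colorable. Map $v_\ell$ to the cluster with the $\ell$-th smallest index of $J$. For each pair $v_\ell v_{\ell'}$, use the bipartite graph labeled $f(\{v_\ell,v_{\ell'}\})$ between the corresponding clusters. Every edge of $H$ then lands on a present triad, because $T_{ijk}=T$ for all triples of $J$ and the left/top/right roles match the order of indices. The embedding (counting) lemma for regular complexes now produces a copy of $H$ in $G$.

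The main obstacle is the first step. It requires setting up the regularity framework so that three things hold simultaneously: the number of parts is uniform over pairs; the labelings are fixed before the Ramsey step, so that the homogeneity of $T$ is meaningful with respect to left, top and right positions; and the density guarantee survives passing to the Ramsey-homogeneous subset $J$. I would try first to require only the averaged-density statement over all sufficiently large $J$, since this is exactly what uniform density of $G$ gives under random selection of parts, and then choose the Ramsey constant $L$ so that it is applicable.
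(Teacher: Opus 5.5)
\textbf{There is a genuine gap, in the step you yourself flag as the main obstacle.} Your lower bound is correct. The final embedding step, once a homogeneous set of clusters with a dense pattern $T$ is in hand, is also fine. The problem is the order of quantifiers in the first two steps.

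\emph{Your colors are not bounded in terms of $\delta$.} The claim that ``$N$ is bounded in terms of $\delta$ only'' is false. The hypergraph regularity lemma gives only $t_0\le t\le T_0$ and $\ell\le T_0$, where $T_0$ depends on $\delta_3$, on the function $\varepsilon_2(\cdot)$, and on $t_0$. Increasing $t_0$ to enlarge the index set can also increase the number $N=\ell$ of bipartite graphs per pair. So you can never guarantee $|I|\ge R_3(\max(h,L);2^{N^3})$, which your Ramsey step needs.

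\emph{The averaged density claim is circular for the same reason.} On an $L$-set $J$ of clusters, uniform density controls the average crossing density. But $T_{ijk}$ counts only regular dense triads. Irregular triads are a $\delta_3$-fraction only globally, so they may concentrate on the triples of $J$, including a Ramsey-homogeneous $J$. To exclude this on every triple of $I$, you would need $\delta_3$ small in terms of $|I|$. Yet $|I|$ must be huge in terms of $N$, and $N$ depends on $\delta_3$.

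\emph{What is missing.} This dependence is exactly why the Reiher--R\"odl--Schacht framework in~\cite{Rei20} is stated for reduced hypergraphs whose vertex classes $\mathcal{P}^{ij}$ have arbitrary finite size, with only the number of indices controlled. In that setting, coloring a triple by its full constituent is not a finite coloring. What you need is a way to extract, from a reduced hypergraph with unbounded vertex classes and all constituents of density at least $d$, a palette with a bounded number of colors and density close to $d$. That palette must be realized coherently on all triples of some $h$ indices.

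The obvious patch does not work. Suppose you sample $N(\delta)$ vertices in each $\mathcal{P}^{ij}$ and color triples by the sampled pattern. Each triple is good only with probability $1-e^{-\Omega(N)}$. This cannot be union-bounded against a Ramsey number in $2^{N^3}$ colors, and the sparse set of bad triples can still contain the homogeneous set.

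Overcoming this obstacle is the content of Lamaison's theorem~\cite{Lam24}. The present paper does not prove the statement; it quotes it from there.
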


\noindent Theorem~\ref{thm:palette} expedited progress in the area as
it permits replacing arguments using the hypergraph regularity with more direct combinatorial arguments concerning palettes.
In particular, it was essential in~\cite{LamW24} to determine the uniform Tur\'an density of generalized stars.
We refer to~\cite{KinPSS25,KraKLT25} for additional applications.
Theorem~\ref{thm:palette} is also one of the cornerstones of the proof of Theorem~\ref{thm:main}.

\section{Proof overview}
\label{sec:overview}

In this section, we provide a high level overview of the proof of Theorem~\ref{thm:main}.
We believe that the proof is conceptually simple although particular steps are involved and
some require computational assistance.
In particular,
we are convinced that
an interested reader would be able to fill in all details based on the information provided in this section.

Theorem~\ref{thm:palette} implies that, in order to prove Theorem~\ref{thm:main},
it suffices to rule out the existence of a palette $\PP=(\Gamma,T)$ with density strictly larger than $1/2$ such that
$K_4^{(3)}$ is not $\PP$-colorable.
In the rest of the section,
we sketch an argument ruling out the existence of such palette $\PP$.
By considering a sufficiently large set of colors,
we can assume that each triple $T$ contains three distinct colors from $\Gamma$ (palettes that
have this property will be referred to as \emph{simple}).
In addition,
a standard symmetrization argument yields (see Lemma~\ref{lm:palette}) that
we can also assume that
there exists $\delta>0$ such that
each color $c$ is contained in at least $\left(\frac{1}{2}+\delta\right)3|\Gamma|^2$ triples of $T$.

\begin{figure}
\begin{center}
\epsfbox{turank4-1.mps}
\end{center}
\caption{Assignment of ordered triples of colors $x$, $y$ and $z$ to the vertices of a hexagon.}
\label{fig:hex0}
\end{figure}

\begin{figure}
\begin{center}
\epsfbox{turank4-2.mps}
\end{center}
\caption{An assignment of colors $x$, $y$ and $z$ to the pairs of vertices witnessing that
         $K_4^{(3)}$ is $\PP$-colorable if the palette $\PP$ contains the triples $xyz$, $xzy$, $yzx$ and $zyx$.
	 The four triples are visualized as vertices of the hexagon in the right part of the figure.}
\label{fig:hex-}
\end{figure}

\begin{figure}
\begin{center}
\epsfbox{turank4-3.mps}
\end{center}
\caption{The ten types of unordered triples denoted by A, B, $\ldots$, J.}
\label{fig:hex10}
\end{figure}

We now classify unordered triples of colors, i.e., three-element subsets of $\Gamma$.
We will visualize the ordered triples of colors contained in $T$ by highlighting the corresponding vertices of a hexagon, according to the correspondence depicted in Figure~\ref{fig:hex0};
note that the triples are listed in a way that the swaps between the left and the top colors and
the swaps between the right and the top colors alternate.
Observe that if $T$ contained the triples $xyz$, $xzy$, $yzx$ and $zyx$ for some three colors $x$, $y$ and $z$,
then $K_4$ would be $\PP$-colorable (see Figure~\ref{fig:hex-}).
In particular,
no hexagon visualizing the triples contained in $T$ has two opposite consecutive pairs of vertices highlighted.
It follows that each unordered triple (up to a symmetry of a hexagon)
is one of the ten types depicted in Figure~\ref{fig:hex10};
we will refer to these types as A, B, $\ldots$, J following the notation given in Figure~\ref{fig:hex10}.

We associate a palette $\PP=(\Gamma,T)$ with the complete $3$-uniform hypergraph $\HH$ on the vertex set $\Gamma$ such that
each edge of $\HH$ is labelled by one of A, B, $\ldots$, J depending on the type of the triple of its vertices.
The edges of $\HH$ have weights assigned based on the number of ordered triples represented by an edge that are contained in $\PP$,
i.e., the edges of type A and B have weight $4$,
the edges of type C, D and E have weight $3$,
the edges of type F, G and H have weight $2$,
the edges of type I have weight $1$ and
the edges of type J have weight $0$.
We will write $a, b, \ldots, j$ for the proportion of the edges of the types A, B, $\ldots$, J in $\HH$, respectively.
The definition of these quantities yields that
\begin{equation}
a+b+c+d+e+f+g+h+i+j=1,\label{eq:sum1}
\end{equation}
and the assumption on the density of the palette $\PP$ implies that
\[\frac{4(a+b) + 3(c+d+e) + 2(f+g+h) + i}{6}\ge\frac{1}{2}+\delta,\]
which is equivalent to
\begin{equation}
4(a+b) + 3(c+d+e) + 2(f+g+h) + i \ge 3+6\delta.\label{eq:low3a}
\end{equation}

We now introduce our main reduction technique.
Let $H_0$ be a $3$-uniform hypergraph on $k_0$ vertices with its edges labelled by A, B, $\ldots$, J, and fix $k\in\NN$, $k\ge 2$.
Suppose that
the following holds for every $3$-uniform hypergraph $H$ on $k_0+k$ vertices
whose edges are labelled A, B, $\ldots$, J and that satisfies the following conditions:
$H_0$ is induced by the $k_0$ vertices, and
the average of the weights of the edges containing one of the $k_0$ vertices and
two of the remaining $k$ vertices is strictly larger than $3$.
Then any choice of ordered triples on $k_0+k$ colors that
is consistent with the types of the edges of $H$ colors $K_4^{(3)}$,
i.e., there exist (not necessarily distinct) colors $a,b,c,d,e,f$ such that $adb$, $afe$, $bec$, and $dfc$ are among the ordered triples.
This technique is illustrated on an example in Figure~\ref{fig:excl}.
Note that this claim can be computationally verified for every fixed $H_0$ and an integer $k\in\NN$ (although
if $H_0$ or $k$ is large, the computation may take very substantial time).
A hypergraph $H_0$ that satisfies the above claim for some $k\in\NN$
will be referred to as \emph{excluded}.

We remark that 
we actually work with more fine-grained notions of a $k$-excluded motif and a $(k,w)$-excluded motif instead of
the notion of an excluded hypergraph in Section~\ref{sec:excl};
the notion of a $(k,w)$-excluded motif involves a weighted average over the vertices of $H_0$,
which makes it possible for the reduction to work with smaller values of $k$.
However, we believe that the presentation of the proof overview
is more accessible without introducing these more involved notions
although we may be technically inaccurate as the weighted average is indeed needed in some of our arguments.

\begin{figure}
\begin{center}
\epsfbox{turank4-8.mps}
\end{center}
\caption{Illustration of the definition of an excluded hypergraph.
         The figure lists all 34 hypergraphs such that
	 the average weight of edges containing one of the top three vertices and
	 the two bottom vertices is strictly larger than $3$.
         If $K_4^{(3)}$ is $\PP$-colorable
	 for every palette $\PP$ that is associated with a hypergraph symmetric to one of the 34 depicted hypergraphs,
	 then the 3-vertex hypergraph $H_0$ is excluded.
	 This corresponds to the case $k_0=3$ and $k=2$ in the definition
	 of an excluded hypergraph given in Section~\ref{sec:overview}.}
\label{fig:excl}
\end{figure}

We now argue that if a hypergraph $H_0$ is excluded, then it cannot appear in $\HH$.
Suppose that $H_0$ satisfies the above claim for $k\in\NN$.
If $\HH$ contained $H_0$, then there would exist a choice of $k$ vertices such that
the average weighted degree of the $k_0$ vertices of $H_0$ with respect to the $k$ vertices is larger than $3\binom{k}{2}$;
the existence of such $k$ vertices follows from the fact that
the minimum weighted degree of $\HH$ is at least $3\left(\frac{1}{2}+\delta\right)n^2$
where $n$ is the number of vertices of $\HH$ (note that $n$ can be made large while keeping $\delta>0$ fixed).
However, the existence of such $k$ vertices in $\HH$ is impossible as $H_0$ is excluded and
$K_4^{(3)}$ is not $\PP$-colorable.

We computationally verified that the $3$-vertex hypergraph $H_0$ formed by a single edge of type E is excluded;
it follows that the $3$-vertex hypergraph formed by a single edge of type B is also excluded.
In addition to a substantial structural impact on a palette $\PP$,
the absence of the edges of types B and E yields that $b=e=0$ in \eqref{eq:low3a},
which implies that
\begin{equation}
4a+3c+3d+2f+2g+2h+i \ge 3+6\delta.\label{eq:low3b}
\end{equation}
We also computationally verified that
each hypergraph obtained from the Fano plane by labeling each edge either C or D is excluded (the argument
used weights and the absence of the edges of types B and E as discussed in Section~\ref{sec:excl}).
Hence, each hypergraph obtained from the Fano plane by labeling each edge A, C or D is excluded.
Since the Tur\'an density of the Fano plane,
which was determined by de Caen and F\"uredi~\cite{DecF00} (we also refer to~\cite{KeeS05} for an exact result),
is equal to $3/4$,
we obtain that
\begin{equation}
a+c+d\le 3/4+o(1).\label{eq:Fano}
\end{equation}
The $o(1)$ term corresponds to lower-order terms in $|\Gamma|$ and
it can be made arbitrarily small while keeping $\delta>0$ fixed.

\begin{figure}
\begin{center}
\epsfbox{turank4-4.mps}
\end{center}
\caption{Illustration of the definition of the legs and the base of an edge of type A.}
\label{fig:redblue}
\end{figure}

The final ingredient of the proof is the inequality
\begin{equation}
a-2j\le 1/4+o(1),\label{eq:flag}
\end{equation}
which we now describe how to obtain.
We start with constructing an auxiliary hypergraph $\HH'$ which is derived from $\HH$.
First observe that each edge of type A consists of four triples such that
\textbf{either} two pairs differ by the swap of the left and top colors and one pair by the swap of the right and top colors \textbf{or}
two pairs differ by the swap of the right and top colors and one pair by the swap of the left and top colors.
The two pairs that differ by the swap of the same kind will be referred to as \emph{legs} of the edge of type A and
the remaining one will be referred to as its \emph{base}.
The leg pairs will be visualized as blue pairs and the base pairs as red, which is illustrated in Figure~\ref{fig:redblue}.
It is easy to show (see Lemma~\ref{lm:redblue}) that
if a pair of vertices is both the leg of an edge of type A and the base of another edge of type A, then $K_4^{(3)}$ is $\PP$-colorable.

\begin{figure}
\begin{center}
\epsfbox{turank4-6.mps}
\end{center}
\caption{Excluded hypergraphs in $\HH$ when the pairs of vertices forming a leg of an edge of type A are colored blue and
         other pairs are colored red. The black pairs can be either blue or red.}
\label{fig:AIJ}
\end{figure}

\begin{figure}
\begin{center}
\epsfbox{turank4-5.mps}
\end{center}
\caption{Possible types of edges in the hypergraph $\HH'$.}
\label{fig:auxHH}
\end{figure}

The hypergraph $\HH'$ is obtained from $\HH$ as follows:
if a pair of vertices is a leg of an edge of type A, then color it blue, and color the pair red otherwise.
Note that the base of any edge of type A is always colored red (as it cannot appear as a leg of any edge of type A).
Using the same argument as for the hypergraphs excluded from appearing in $\HH$,
we computationally verified that
none of the four hypergraphs depicted in Figure~\ref{fig:AIJ} can appear in $\HH$
when the vertex pairs are colored blue and red as described above.
The hypergraph $\HH'$ is obtained by relabelling any edge of type C, D, F, G, H or I to the type~$\star$ and
also relabelling any edge of type J with at least one blue pair to the type~$\star$.
Hence, the hypergraph $\HH'$ is a complete $3$-uniform hypergraph,
each edge is of one of the types A, J and $\star$, and
the pairs of vertices of the hypergraph $\HH'$ are colored either red or blue.
Note that each edge of $\HH'$ is of one of the six types depicted in Figure~\ref{fig:auxHH}.
Assuming that none of the hypergraphs depicted in Figure~\ref{fig:AIJ} is present (before relabelling the edges),
the inequality \eqref{eq:flag} can be established using a flag algebra argument (specific technical details
are given in Section~\ref{sec:flag}) applied to $\HH'$.
In particular,
the inequality holds in a stronger form when $j$ is the proportion of edges of type J in $\HH'$,
which can be smaller than that in $\HH$.

To complete the proof,
we consider the inequality that
is obtained by summing twice \eqref{eq:sum1}, \eqref{eq:Fano} and \eqref{eq:flag} (recall that $b=e=0$):
\[4a+3c+3d+2f+2g+2h+2i\le 3+o(1).\]
Since $i\ge 0$, this inequality contradicts \eqref{eq:low3b}.
Hence, there is no palette $\PP$ with density larger than $1/2$ such that $K_4^{(3)}$ is not $\PP$-colorable.

\section{Preliminaries}
\label{sec:prelim}

In this section,
we apply standard arguments concerning palettes to establish all assumptions on a considered palette $\PP$ that
were presented in Section~\ref{sec:overview}.
Before doing so, we need to fix some notation;
throughout the paper, we use $[k]$ to denote the set of the first $k$ positive integers and
$\RR^+$ to denote the set of all positive reals.
We also need to introduce an additional definition.
The \emph{Lagrangian} $L(\PP)$ of a palette $\PP=(\Gamma,T)$ is
\[\max_{p:\Gamma\to [0,1], \sum p=1}\sum_{xyz\in T}p(x)p(y)p(z)\]
where the maximum is taken over all probability distributions $p$ on $\Gamma$,
i.e., functions $p:\Gamma\to [0,1]$ such that $\sum_{x\in \Gamma}p(x)=1$.

The next lemma is based on a routine argument from mathematical analysis and
it is a continuous version of Zykov's symmetrization~\cite{Zyk49},
which can be used to provide an alternative proof of Lemma~\ref{lm:palette}.
We include a short proof for completeness.

\begin{lemma}
\label{lm:sympal}
Let $\PP=(\Gamma,T)$ be a palette.
There exists a probability distribution $p$ on $\Gamma$ such that
it holds for every color $x\in\Gamma$ that
\[\sum_{xyz\in T}p(y)p(z)+\sum_{yxz\in T}p(y)p(z)+\sum_{yzx\in T}p(y)p(z)=3L(\PP)\]
unless $p(x)=0$.
\end{lemma}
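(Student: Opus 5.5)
We will obtain $p$ as a maximizer of the cubic form $F(p)=\sum_{xyz\in T}p(x)p(y)p(z)$ over the simplex $\Delta=\{p:\Gamma\to[0,1],\ \sum p=1\}$. Since $\Delta$ is compact and $F$ is continuous, a maximizer exists, and $F(p)=L(\PP)$ at any maximizer. For a color $x$, let $D_x(p)$ denote the left-hand side of the identity in the statement. A triple containing $x$ several times contributes to several of the three sums, so $D_x(p)=\partial F/\partial p(x)$ in every case. Because $F$ is homogeneous of degree $3$, Euler's identity gives $\sum_x p(x)D_x(p)=3F(p)=3L(\PP)$.

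Next we compare partial derivatives at a maximizer. Suppose $p(x)>0$ and $D_x(p)<D_{x'}(p)$ for some color $x'$. For small $t>0$, move mass $t$ from $x$ to $x'$. Then $F$ changes by $t\bigl(D_{x'}(p)-D_x(p)\bigr)+O(t^2)$, which is positive for small $t$ and contradicts maximality. Hence every color in the support of $p$ attains $\max_y D_y(p)$, so all $D_x(p)$ with $p(x)>0$ share a common value $M$. Euler's identity then gives $3L(\PP)=\sum_x p(x)D_x(p)=M\sum_x p(x)=M$. This proves the claim for every $x$ with $p(x)>0$, which is exactly what the statement requires.

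The main obstacle is the bookkeeping around triples with repeated colors, and it amounts to checking that $D_x$ is truly the partial derivative. Take a triple such as $xxz$. It contributes $p(x)^2p(z)$ to $F$, with derivative $2p(x)p(z)$. It is counted in the first sum (as $x$ in the left position, with $y=x$) and in the second sum (as $x$ in the top position), so it contributes $p(x)p(z)$ to each, for a total of $2p(x)p(z)$, as required. A triple $xxx$ gives $3p(x)^2$ on both sides.

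The perturbation step needs only the first-order Taylor expansion of the polynomial $F$. Moving mass from $x$ to $x'$ stays inside $\Delta$ because $p(x)>0$.
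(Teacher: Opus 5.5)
Your proof is correct and follows the same route as the paper: take a maximizer of $F$ on the simplex, show that the partial derivatives $\partial F/\partial p(x)$ agree on the support, and then use Euler's identity $\sum_x p(x)\,\partial F/\partial p(x)=3F(p)=3L(\PP)$ to identify the common value. The only difference is that you replace the paper's appeal to the Lagrange Multiplier Theorem with an explicit mass-transfer argument, which handles maximizers on the boundary of the simplex directly.
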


\begin{proof}
Fix a palette $\PP=(\Gamma,T)$ and
consider a function $F:[0,1]^\Gamma\to\RR$ defined as
\[F(s)=\sum_{xyz\in T} s_x s_y s_z.\]
Note that $L(\PP)$ is the maximum of $F$ over all non-negative vectors $s\in [0,1]^\Gamma$
whose entries sum to one.
Let $s\in [0,1]^\Gamma$ be such a vector at which $F$ attains its maximum.
The Lagrange Multiplier Theorem implies that there exists $C\in\RR$ such that
\[\frac{\partial}{\partial s_x}F(s)=C\]
for every $x\in\Gamma$ such that $s_x\not=0$ (note that this statement is also true
if $s$ has a single non-zero entry).
Observe that
\[\frac{\partial}{\partial s_x}F(s) = \sum_{xyz\in T}s_ys_z+\sum_{yxz\in T}s_ys_z+\sum_{yzx\in T}s_ys_z.\]
It now follows that
\[3L(\PP) = \sum_{xyz\in T}s_xs_ys_z+\sum_{yxz\in T}s_ys_xs_z+\sum_{yzx\in T}s_ys_zs_x
           = \sum_{x\in\Gamma} s_x \frac{\partial}{\partial s_x}F(s) = C \sum_{x\in\Gamma} s_x = C.\]
Hence, the statement of the lemma holds for $p(x)=s_x$ as
the formula displayed in the statement is exactly $\frac{\partial}{\partial s_x}F(s)$,
which is equal to $C=3L(\PP)$ for any color $x$ such that $s_x\not=0$.
\end{proof}

We are now ready to state the main lemma of this section.

\begin{lemma}
\label{lm:palette}
If the uniform Tur\'an density of $K_4^{(3)}$ were strictly larger than $1/2$,
then there would exist an integer $n_0$ and a real $d>1/2$ such that
for every $n\ge n_0$, there would be a palette $\PP$ with $n$ colors and the following properties:
\begin{itemize}
\item $K_4^{(3)}$ is not $\PP$-colorable,
\item $\PP$ is simple, i.e., each feasible triple consists of three distinct colors, and
\item each color of $\PP$ is contained in at least $3dn^2+36n$ feasible triples.
\end{itemize}
\end{lemma}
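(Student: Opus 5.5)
Assume the uniform Tur\'an density of $K_4^{(3)}$ exceeds $1/2$. By Theorem~\ref{thm:palette} there is a palette $\PP_0=(\Gamma_0,T_0)$ with $d(\PP_0)>1/2$ such that $K_4^{(3)}$ is not $\PP_0$-colorable. The plan is to transform $\PP_0$ in three stages. First we pass to a Lagrangian-optimal subpalette, which equalizes the color degrees. Then we blow it up into a simple palette with $n$ colors. Finally we check that neither operation creates a coloring of $K_4^{(3)}$.

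\textbf{Step 1: Lagrangian.} Taking the uniform distribution shows $L(\PP_0)\ge d(\PP_0)>1/2$. Apply Lemma~\ref{lm:sympal} to get a distribution $p$. Restrict to the support $\Gamma_1=\{x:p(x)>0\}$ and the triples of $T_0$ inside $\Gamma_1^3$. The restricted palette $\PP_1$ is still not $K_4^{(3)}$-colorable, since its feasible triples are a subset of those of $\PP_0$. It has the same Lagrangian, and every color $x\in\Gamma_1$ has weighted degree exactly $3L:=3L(\PP_0)$. Fix a real $d$ with $1/2<d<L$.

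\textbf{Step 2: blow-up.} For a given large $n$, choose integers $n_x\approx p(x)n$ with $\sum_x n_x=n$ and $|n_x-p(x)n|\le 1$. Let $\Gamma$ consist of $n_x$ copies of each color $x$. A triple $(x_i,y_j,z_k)$ of copies is feasible if and only if $xyz\in T_1$ and the three copies are pairwise distinct as elements of $\Gamma$.

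This palette $\PP$ is simple by construction. It is not $K_4^{(3)}$-colorable, because projecting every copy to its original color maps feasible triples to feasible triples. A $K_4^{(3)}$-coloring of $\PP$ (the colors $u,\dots,z$ of Figure~\ref{fig:K4}) would therefore project to one of $\PP_1$.

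\textbf{Step 3: degree count.} Fix a copy $x_i$. The number of triples containing it in the left position is
\[\sum_{xyz\in T_1} n_y n_z\]
minus the triples whose copies are not distinct. The count for the other two positions is analogous. Summing the three positions gives roughly $n^2\cdot 3L$, with two sources of error:
\begin{itemize}
\item Rounding $n_y=p(y)n\pm1$ costs $O(|T_1|n)$.
\item Removing coincident copies costs $O(|\Gamma_1|^2 n)$.
\end{itemize}
Hence the degree of $x_i$ is at least
\[3Ln^2-Cn,\]
where $C$ depends only on $\PP_1$. Since $L>d$, this is at least $3dn^2+36n$ once $n\ge n_0$, for $n_0$ large enough in terms of $L-d$ and $C$.

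\textbf{Main obstacle.} The only delicate point is the error accounting in Step 3. The constants must depend only on $\PP_1$ and not on $n$, and the slack $3(L-d)n^2$ must absorb all linear error terms. This is routine once $d<L$ is fixed in advance.
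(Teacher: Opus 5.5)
Your proposal is correct and follows essentially the same route as the paper. You take a Lagrangian-optimal distribution from Lemma~\ref{lm:sympal}, blow each color up into about $p(x)n$ clones with triples of distinct clones feasible, project back to rule out a coloring of $K_4^{(3)}$, and absorb the $O(n)$ rounding and coincidence losses into the slack $3(L-d)n^2$. The only differences are cosmetic: you restrict explicitly to the support of $p$, whereas the paper rounds with floors and ceilings so that $k_x=0$ whenever $p_0(x)=0$, and you pick any $d\in(1/2,L)$ rather than $d=d_0/2+1/4$.
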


\begin{proof}
Suppose that the uniform Tur\'an density of $K_4^{(3)}$ is larger than $1/2$.
By Theorem~\ref{thm:palette},
there exists a palette $\PP_0=(\Gamma_0,T_0)$ with density $d_0>1/2$ such that $K_4^{(3)}$ is not $\PP_0$-colorable.
Note that the Lagrangian $L(\PP_0)$ of $\PP_0$ is at least $d_0$ (as witnessed by the uniform distribution on $\Gamma_0$).
Let $p_0$ be the probability distribution of $\Gamma_0$ as in Lemma~\ref{lm:sympal}.

For every $n\in\NN$, we construct a palette $\PP_n$ with $n$ colors as follows.
Fix integers $k_x\in\NN$, $x\in\Gamma_0$, such that
\[
k_x\in\{\lfloor p_0(x)n\rfloor,\lceil p_0(x)n\rceil\}\qquad\mbox{and}\qquad
\sum_{x\in\Gamma_0}k_x=n.
\]
Such a choice is always possible.
Note that $k_x=0$ whenever $p_0(x)=0$.
For each color $x\in\Gamma_0$,
the palette $\PP_n$ contains $k_x$ clones of the color $x$, which will be denoted by $x_1,\ldots,x_{k_x}$.
Note that the palette $\PP_n$ has exactly $n$ colors.
A triple $x_iy_jz_k$ is a feasible triple in the palette $\PP_n$
if $xyz$ is a feasible triple in $\PP_0$ and the colors $x_i$, $y_j$ and $z_k$ are distinct.

Set $d=d_0/2+1/4$ and note that $1/2<d<d_0$.
We now verify that the palette $\PP_n$ has the three properties given in the statement of the lemma (for the chosen value of $d$)
when $n$ is sufficiently large.
If $K_4^{(3)}$ were $\PP_n$-colorable,
then assigning the color $x$ of $\PP_0$ to any pair of the vertices of $K_4^{(3)}$ that is assigned a color $x_i$ of $\PP_n$ (and
keeping the order of the vertices the same) would witness that $K_4^{(3)}$ is $\PP_0$-colorable.
Hence, $K_4^{(3)}$ is not $\PP_n$-colorable.
On the other hand, the construction process guarantees that $\PP_n$ is simple.

It remains to analyze the last property given in the statement of the lemma.
Observe that each color $z$ of $\PP_n$ is contained in at least
\[3d_0n^2-3|\Gamma_0|n-3n-3n\]
feasible triples
where the first term corresponds to the number of triples that would contain $z$ in the ideal fractional case,
the second term is an upper bound on the number of such triples lost because of the choices of $k_x$ (which is $3n$ per each color of $\Gamma_0$),
the third term is an upper bound on the number of triples removed because they contain the color $z$ at least twice, and
the last term is an upper bound on the number of triples removed because they contain a color different from $z$ twice.
Set $n_0\in\NN$ so that it holds that
\[3d_0n^2-3|\Gamma_0|n-6n\ge 3dn^2+36n\]
for every $n\ge n_0$ (recall that $d<d_0$).
We conclude for any $n\ge n_0$ that
the palette $\PP_n$ also has the third property given in the statement of the lemma.
\end{proof}

\section{Excluded motifs}
\label{sec:excl}

In this section, we analyze the hypergraph that is derived from a palette $\PP$ in the way described in Section~\ref{sec:overview}.
Recall that if $\PP$ is a simple palette such that $K_4^{(3)}$ is not $\PP$-colorable,
then any three colors of $\PP$ give rise to one of the ten types of configurations depicted in Figure~\ref{fig:hex10}.
Hence, we can associate a simple palette $\PP=(\Gamma,T)$ with the complete $3$-uniform hypergraph $\HH(\PP)$ such that
the vertex set of $\HH(\PP)$ is $\Gamma$ and
an edge $e$ is labeled based on the feasible triples containing the three colors forming the edge $e$.
In addition, for each edge of type A, we color the two pairs of colors forming its legs blue (the base and
the legs of an edge of type A are defined as in Section~\ref{sec:overview}).
We color the remaining pairs of colors red.
Hence, the hypergraph $\HH(\PP)$ is an extension of the hypergraph $\HH$ considered in Section~\ref{sec:overview}.
As mentioned in Section~\ref{sec:overview} and proven in the next lemma,
no pair of colors can be the base of an edge of type A and a leg of another edge of type A.
In particular, each edge of type A has its base colored red and the two legs colored blue.

\begin{lemma}
\label{lm:redblue}
Let $\PP$ be a simple palette.
If there are four colors $x$, $y$, $z$ and $z'$ such that
the triple $\{x,y,z\}$ is an edge of type A in $\HH(\PP)$ and $xy$ is its leg, and
the triple $\{x,y,z'\}$ is an edge of type A in $\HH(\PP)$ and $xy$ is its base,
then $K_4^{(3)}$ is $\PP$-colorable.
\end{lemma}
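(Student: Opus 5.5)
The plan is a direct case analysis. We describe the edges of type A explicitly in terms of ordered triples and then exhibit, in each case, six colors $u,v,w,x,y,z$ with $uxv,uzy,xzw,vyw\in T$.

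\textbf{Step 1: explicit form of type A.} An edge of type A highlights four consecutive vertices of the hexagon in Figure~\ref{fig:hex0}. This is the only four-vertex configuration avoiding two opposite highlighted consecutive pairs. So its four triples form a path $t_1t_2t_3t_4$ whose steps alternate between left--top and top--right swaps. Writing the three colors as $a,b,c$, there are two patterns.
\begin{itemize}
\item Pattern (L,R,L) gives $abc, bac, bca, cba$. The consecutive swaps exchange the colors $\{a,b\}$, $\{a,c\}$ and $\{b,c\}$, so the legs are $ab$ and $bc$ and the base is $ac$.
\item Pattern (R,L,R) gives $abc, acb, cab, cba$. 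Again the legs are $ab,bc$ and the base is $ac$.
\end{itemize}
In both patterns the base is formed by the two colors that appear as the first color of $t_1$ and of $t_4$.

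\textbf{Step 2: reducing the cases.} We use a symmetry. Reversing the order of the four vertices of $K_4^{(3)}$ turns the condition into the same condition for the palette with every triple $xyz$ replaced by $zyx$. This reversal maps pattern (L,R,L) to pattern (R,L,R) and preserves legs and bases. Hence, up to this symmetry and renaming the two colors of the shared pair $\{x,y\}$, only a few cases remain. These are determined by:
\begin{itemize}
\item the patterns of the two edges;
\item which leg of the first edge is $\{x,y\}$;
\item the orientation of $x,y$ within each edge.
\end{itemize}
Taking the leg $\{a,b\}$ or $\{b,c\}$ is symmetric under $a\leftrightarrow c$ together with reversal, so it also suffices to treat one leg.

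\textbf{Step 3: exhibiting a copy of $K_4^{(3)}$ in each case.} As a representative case, take both edges with pattern (L,R,L). Let the first edge be $(a,b,c)=(x,y,z)$, giving the triples $xyz, yxz, yzx, zyx$. Let the second edge be $(a,b,c)=(x,z',y)$, giving $xz'y, z'xy, z'yx, yz'x$. Then choosing $u=x$, ``$x$''$=y$, $v=z$, ``$z$''$=z'$, ``$y$''$=y$ and $w=x$ uses the triples
\[
xyz,\quad xz'y,\quad yz'x,\quad zyx,
\]
which are exactly the four required triples $uxv,uzy,xzw,vyw$. So $K_4^{(3)}$ is $\PP$-colorable in this case. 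The remaining cases (the other pattern for the second edge, and the other orientation of $xy$ within it) are handled the same way: search among the eight triples for a pair with equal first color and complete the pattern.

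The main obstacle is only bookkeeping. We must make sure the symmetry reductions are valid, in particular that reversal really swaps the two patterns while preserving legs and bases. If some case resists the search, we fall back on listing all placements exhaustively. There are at most $8$ sub-cases, each a finite search over the $8$ triples.
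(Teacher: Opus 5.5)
Your proof is correct and is essentially the paper's: the paper also assumes, by swapping the names $x$ and $y$, that $xz$ is the base of $\{x,y,z\}$, and then uses exactly your four triples $xyz$, $zyx$, $xz'y$, $yz'x$. Your remaining cases need no separate search, because your Step~1 already shows that both patterns contain $t_1=abc$ and $t_4=cba$, so these four triples are present in every case; this is why the paper gets by with a single case.

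One incidental remark in Step~1 is false, though unused: four consecutive hexagon vertices is not the only admissible four-vertex configuration. Type B, e.g.\ $\{abc,bac,bca,cab\}$, also avoids two opposite highlighted consecutive pairs. Type A is simply defined to be the consecutive configuration, so nothing breaks.
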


\begin{figure}
\begin{center}
\epsfbox{turank4-7.mps}
\end{center}
\caption{An assignment of colors to the pairs of vertices of $K_4^{(3)}$
         witnessing that $K_4^{(3)}$ is $\PP$-colorable for any palette $\PP$ that
	 contains four colors $x$, $y$, $z$ and $z'$ such that
	 all triples $xyz$, $zyx$, $xz'y$ and $yz'x$ are feasible.}
\label{fig:rbK4}
\end{figure}

\begin{proof}
Fix a simple palette $\PP$ and colors $x$, $y$, $z$ and $z'$ as in the statement of the lemma.
By symmetry, we may assume that $xz$ is the base of the edge $\{x,y,z\}$.
It follows that the following triples are feasible:
$xyz$ and $zyx$ (because $xz$ is the base of the edge $\{x,y,z\}$) and
$xz'y$ and $yz'x$ (because $xy$ is the base of the edge $\{x,y,z'\}$).
An assignment of the colors to the pairs of vertices of $K_4^{(3)}$ witnessing that
$K_4^{(3)}$ is $\PP$-colorable can be found in Figure~\ref{fig:rbK4}.
\end{proof}

In Section~\ref{sec:flag}, we will need the following auxiliary lemma.
Since its proof is analogous to that of Lemma~\ref{lm:redblue}, we state and prove it now.

\begin{lemma}
\label{lm:leftright}
Let $\PP=(\Gamma,T)$ be a simple palette.
If there exist colors $x$ and $y$ such that
there are feasible triples that differ by swapping $x$ and $y$ as the left and top colors and
there are feasible triples that differ by swapping $x$ and $y$ as the right and top colors,
i.e.~there exist colors $z$ and $z'$ such that $xyz, yxz, z'xy, z'yx\in T$,
then $K_4^{(3)}$ is $\PP$-colorable.
\end{lemma}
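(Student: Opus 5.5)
The plan is to exhibit an explicit assignment of colors to the six pairs of vertices of $K_4^{(3)}$ under which all four edges receive feasible triples, in the same spirit as the proof of Lemma~\ref{lm:redblue}. Fix colors $x,y,z,z'$ with $xyz,yxz,z'xy,z'yx\in T$, and order the vertices of $K_4^{(3)}$ as $v_1,v_2,v_3,v_4$. By definition of $\PP$-colorability, for $i<j<k$ the edge $\{v_i,v_j,v_k\}$ needs $f(v_iv_j)f(v_iv_k)f(v_jv_k)\in T$. So we need:
\begin{itemize}
\item $f_{12}f_{13}f_{23}\in T$ for the edge $\{v_1,v_2,v_3\}$,
\item $f_{12}f_{14}f_{24}\in T$ for the edge $\{v_1,v_2,v_4\}$,
\item $f_{13}f_{14}f_{34}\in T$ for the edge $\{v_1,v_3,v_4\}$,
\item $f_{23}f_{24}f_{34}\in T$ for the edge $\{v_2,v_3,v_4\}$.
\end{itemize}

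The two triples containing $z'$ share the left color $z'$. Therefore we place $z'$ on the pair $v_1v_2$, which is the left color of both edges containing $v_1$ and $v_2$. We then choose
\[f_{12}=z',\quad f_{13}=x,\quad f_{23}=y,\quad f_{14}=y,\quad f_{24}=x,\quad f_{34}=z.\]

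We check the four edges in turn:
\begin{itemize}
\item the edge $\{v_1,v_2,v_3\}$ receives $z'xy\in T$;
\item the edge $\{v_1,v_2,v_4\}$ receives $z'yx\in T$;
\item the edge $\{v_1,v_3,v_4\}$ receives $f_{13}f_{14}f_{34}=xyz\in T$;
\item the edge $\{v_2,v_3,v_4\}$ receives $f_{23}f_{24}f_{34}=yxz\in T$.
\end{itemize}

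Hence all four edges of $K_4^{(3)}$ receive feasible triples, and $K_4^{(3)}$ is $\PP$-colorable. In the notation of Figure~\ref{fig:K4}, this corresponds to $u=z'$, $x=x$, $v=y$, $z=y$, $y=x$ and $w=z$.

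The only real obstacle is finding the right placement of the colors. The key observation is that $z'$ must sit on the pair that is the left color of two edges; the rest is then forced by matching the remaining two triples. Simplicity of $\PP$ is not needed here, since colors on distinct pairs are allowed to coincide.
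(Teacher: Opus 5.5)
Your assignment $f_{12}=z'$, $f_{13}=f_{24}=x$, $f_{14}=f_{23}=y$, $f_{34}=z$ is correct: the four edges receive $z'xy$, $z'yx$, $xyz$ and $yxz$, which are exactly the four given feasible triples. This matches the paper's approach, which proves the lemma by exhibiting an explicit coloring of the pairs of $K_4^{(3)}$ (Figure~\ref{fig:lrK4}).
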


\begin{figure}
\begin{center}
\epsfbox{turank4-12.mps}
\end{center}
\caption{An assignment of colors to the pairs of vertices of $K_4^{(3)}$
         witnessing that $K_4^{(3)}$ is $\PP$-colorable for any palette $\PP$ that
	 contains four colors $x$, $y$, $z$ and $z'$ such that
	 all triples $xyz$, $yxz$, $z'xy$ and $z'yx$ are feasible.}
\label{fig:lrK4}
\end{figure}

\begin{proof}
Fix a simple palette $\PP$ and colors $x$, $y$, $z$ and $z'$ as in the statement of the lemma.
An assignment of the colors to the pairs of vertices of $K_4^{(3)}$ witnessing that
$K_4^{(3)}$ is $\PP$-colorable can be found in Figure~\ref{fig:lrK4}.
\end{proof}

For presenting our reduction techniques, we need the following auxiliary lemma.

\begin{lemma}
\label{lm:avg}
Let $k_0,k\in\NN$, $k\ge 2$, and let $w:[k_0]\to\RR^+$.
The following holds for every simple palette $\PP=(\Gamma,T)$ such that
$\PP$ has at least $k_0+k$ colors, i.e., $|\Gamma|\ge k_0+k$, and
each color of $\PP$ is contained in more than $3|\Gamma|^2/2+6(k_0-1)|\Gamma|$ feasible triples:
for every $k_0$-element subset $X=\{x_1,\ldots,x_{k_0}\}\subseteq\Gamma$,
there exist a $k$-element subset $Y\subseteq\Gamma\setminus X$ such that
\[
\sum_{i\in [k_0]}w(i)\left(\left|T\cap\left(\{x_i\}\times Y^2\right)\right| +
                           \left|T\cap\left( Y\times\{x_i\}\times Y\right)\right| +
                           \left|T\cap\left( Y^2\times\{x_i\}\right)\right|\right)\]
is larger than $\frac{3}{2}|Y|\left(|Y|-1\right)\sum\limits_{i\in [k_0]}w(i)$.
\end{lemma}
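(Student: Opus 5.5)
The plan is a first-moment (averaging) argument: we choose $Y$ uniformly at random among all $k$-element subsets of $\Gamma\setminus X$. We then show that the expected value of the weighted sum exceeds $\frac{3}{2}k(k-1)\sum_i w(i)$, so some choice of $Y$ achieves it. By linearity it suffices to prove, for each fixed $i\in[k_0]$, that
\[
\mathbb{E}\left[\left|T\cap\left(\{x_i\}\times Y^2\right)\right| + \left|T\cap\left( Y\times\{x_i\}\times Y\right)\right| + \left|T\cap\left( Y^2\times\{x_i\}\right)\right|\right] > \tfrac{3}{2}k(k-1).
\]
Multiplying these $k_0$ inequalities by $w(i)>0$ and summing then gives the claim. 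The weights play no role beyond this, so the statement reduces to a single-color estimate.

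Fix $i$ and write $n=|\Gamma|$. Since $\PP$ is simple, every feasible triple containing $x_i$ contains it exactly once. Such a triple is therefore determined by the position of $x_i$ and an ordered pair $(y,z)$ of distinct colors other than $x_i$. Let $N_i$ be the number of feasible triples containing $x_i$; by assumption $N_i>3n^2/2+6(k_0-1)n$.

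Next we discard the triples whose other two colors are not both in $\Gamma\setminus X$. A triple containing $x_i$ and some $x_j$ with $j\neq i$ is fixed once we choose $j$, the two positions of $x_i$ and $x_j$ (at most $6$ ordered choices), and the remaining color (at most $n$ choices). Hence there are at most $6(k_0-1)n$ such triples. So at least $N_i-6(k_0-1)n>3n^2/2$ feasible triples have the form "$x_i$ together with an ordered pair of distinct colors from $\Gamma\setminus X$". Each such triple is counted in the quantity for $Y$ exactly when both of its other colors lie in $Y$. For any two distinct elements of $\Gamma\setminus X$ this happens with probability
\[
\frac{k(k-1)}{m(m-1)}, \qquad m=|\Gamma\setminus X|=n-k_0\ge k\ge 2.
\]
The expectation is therefore greater than
\[
\frac{3n^2}{2}\cdot\frac{k(k-1)}{m(m-1)}\ge \frac{3}{2}k(k-1),
\]
because $m(m-1)<n^2$. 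The inequality is strict since the count of good triples is strictly larger than $3n^2/2$.

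The only step needing care is the bookkeeping in the removal bound. One must check that $6(k_0-1)n$ genuinely covers every triple containing $x_i$ and some other element of $X$, with the factor $6$ accounting for the ordered positions of $x_i$ and $x_j$. This is also where simplicity of $\PP$ is used: it rules out triples in which $x_i$ appears twice, which would otherwise escape both the count and the probability computation. Everything else is direct linearity of expectation, so I expect no real obstacle.
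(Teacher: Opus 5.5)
Your proposal is correct and follows the paper's proof essentially step for step. You discard the at most $6(k_0-1)|\Gamma|$ triples that meet another color of $X$, and use $\frac32|\Gamma|^2\ge\frac32|\Gamma\setminus X|(|\Gamma\setminus X|-1)$ to pass to a uniformly random $k$-subset $Y\subseteq\Gamma\setminus X$. Linearity of expectation with the positive weights then finishes it, exactly as in the paper; your explicit inclusion probability $k(k-1)/(m(m-1))$ and your note on where simplicity is used only spell out what the paper leaves implicit.
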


\begin{proof}
Fix $k_0,k\in\NN$, a simple palette $\PP=(\Gamma,T)$ and a $k_0$-element subset $X\subseteq\Gamma$ as in the statement of the lemma.
Set $Y_0=\Gamma\setminus X$.
Consider a color $x\in X$.
The color $x$ is in at most $6|\Gamma|$ feasible triples with any fixed color $x'\in X$ and
so the number of feasible triples containing $x$ and another color of $X$ is at most $6(k_0-1)|\Gamma|$.
It follows the number of feasible triples containing $x$ and two colors from $Y_0$
is larger than
\[\frac{3}{2}|\Gamma|^2\ge \frac{3}{2}|Y_0|\left(|Y_0|-1\right).\]
Hence, when a $k$-element subset $Y\subseteq Y_0$ is chosen uniformly at random,
the expected number of feasible triples containing the color $x$ and two colors from $Y$ is larger than $\frac{3}{2}k(k-1)$.
Since this holds for every color $x\in X$,
the linearity of expectation implies that
when a $k$-element subset $Y\subseteq Y_0$ is chosen uniformly at random,
the expectation of the displayed sum in the statement of the lemma 
is larger than
\[\frac{3}{2}k(k-1)\sum\limits_{i\in [k_0]}w(i).\]
In particular, there exists a $k$-element subset $Y\subseteq Y_0$ such that
the displayed inequality holds.
\end{proof}

\subsection{Simple reduction technique}

We analyze configurations that cannot appear in a palette $\PP$ that
has the properties given in Lemma~\ref{lm:palette}.
Such configurations consist of colors and their triples, i.e. they can be viewed as palettes.
To emphasize their different role in our argument, we will refer to them as motifs.
Formally, a \emph{motif} is a pair $(\gamma,t)$ such that $t\subseteq\gamma^3$,
i.e. $(\gamma,t)$ forms a palette.
We say that a palette $\PP=(\Gamma,T)$ \emph{contains} a motif $(\gamma,t)$
if there exists an injective map $\varphi:\gamma\to\Gamma$ such that
$\varphi(x)\varphi(y)\varphi(z)\in T$ for every $xyz\in t$.
Notations defined earlier for palettes are also used in the setting of motifs,
in particular,
we write $\HH(\gamma,t)$ for the hypergraph $\HH(\PP)$ and
we say that $K_4^{(3)}$ is $(\gamma,t)$-colorable if it is $\PP$-colorable
where $\PP=(\gamma,t)$.
Finally, two motifs $(\gamma,t)$ and $(\gamma',t')$ are \emph{isomorphic}
if they differ by renaming the colors and possibly swapping the roles of left and right colors in all feasible triples,
i.e. $(\gamma,t)$ and $(\gamma',t')$ are isomorphic
if there exists a bijection $f:\gamma\to\gamma'$ such that
\textbf{either} $xyz\in t$ iff $f(x)f(y)f(z)\in t$ for all $x,y,z\in\gamma$ \textbf{or}
$xyz\in t$ iff $f(z)f(y)f(x)\in t$ for all $x,y,z\in\gamma$.

Consider $k\ge 2$. A motif $(\gamma,t)$ is \emph{$k$-excluded}
if $K_4^{(3)}$ is $\PP$-colorable
by any simple palette $\PP=(\Gamma,T)$ with $\gamma\subseteq\Gamma$, $|\Gamma|=|\gamma|+k$ and $t\subseteq T$
such that
the palette $\PP$ contains more than $3|\gamma|k(k-1)/2$ feasible triples containing one color from $\gamma$ and two colors from $\Gamma\setminus\gamma$.
Note that this definition can be viewed as a special case of the definition of an excluded hypergraph given in Section~\ref{sec:overview} in the motif setting:
a hypergraph $H$ is excluded (in the sense given in Section~\ref{sec:overview})
if and only if
every motif $(\gamma,t)$ consistent with $H$ is $k$-excluded for some $k$.
We remark that it is not hard to show that
if a motif $(\gamma,t)$ is $k$-excluded, then it is also $k'$-excluded for every $k'\ge k$.
The following is a key fact for our arguments:
checking whether a motif $(\gamma,t)$ is $k$-excluded can be done computationally
since there are only finitely many palettes $\PP$ with the properties given in the definition of a $k$-excluded motif.

The importance of $k$-excluded motifs is captured by the next lemma.

\begin{lemma}
\label{lm:excl}
Let $(\gamma,t)$ be a $k$-excluded motif for some $k\ge 2$.
Every simple palette $\PP=(\Gamma,T)$ such that
\begin{itemize}
\item $K_4^{(3)}$ is not $\PP$-colorable,
\item $\PP$ has at least $|\gamma|+k$ colors, i.e., $|\Gamma|\ge |\gamma|+k$, and
\item each color is contained in more than $3|\Gamma|^2/2+6(|\gamma|-1)|\Gamma|$ feasible triples,
\end{itemize}
does not contain the motif $(\gamma,t)$.
\end{lemma}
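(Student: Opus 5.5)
We argue by contradiction: suppose $\PP=(\Gamma,T)$ satisfies the three listed properties and contains the motif $(\gamma,t)$, witnessed by an injective map $\varphi:\gamma\to\Gamma$. Set $k_0=|\gamma|$ and $X=\varphi(\gamma)$, a $k_0$-element subset of $\Gamma$. The plan is to locate $k$ further colors that, together with $X$, form a subpalette satisfying the hypotheses in the definition of a $k$-excluded motif. This will force $K_4^{(3)}$ to be colorable by that subpalette, and hence by $\PP$.

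First we apply Lemma~\ref{lm:avg} with this $k_0$, the given $k$, and the constant weight function $w\equiv 1$. Its hypotheses are exactly those of the present lemma: $\PP$ is simple, $|\Gamma|\ge k_0+k$, and every color lies in more than $3|\Gamma|^2/2+6(k_0-1)|\Gamma|$ feasible triples. The lemma gives a $k$-element set $Y\subseteq\Gamma\setminus X$ such that the number of feasible triples of $T$ consisting of one color of $X$ and two colors of $Y$ is larger than $\frac32 k(k-1)k_0=3|\gamma|k(k-1)/2$. With $w\equiv1$ the displayed sum counts each such triple exactly once: because $\PP$ is simple, a triple with one coordinate in $X$ and two in $Y$ has a unique position for its $X$-color.

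Next we form the palette $\PP'=(\Gamma',T')$ with $\Gamma'=\gamma\cup Y'$, where $Y'$ is a set of $k$ new symbols identified with $Y$. Extend $\varphi$ to a bijection $\psi:\Gamma'\to X\cup Y$, and let $T'$ be the pullback under $\psi$ of $T\cap(X\cup Y)^3$. Then:
\begin{itemize}
\item $\PP'$ is simple, since $\PP$ is simple and $\psi$ is injective;
\item $|\Gamma'|=|\gamma|+k$;
\item $t\subseteq T'$, because $\varphi$ maps every triple of $t$ into $T$;
\item the count of triples in $T'$ with one color from $\gamma$ and two from $\Gamma'\setminus\gamma$ is exactly the count from the previous paragraph, which exceeds $3|\gamma|k(k-1)/2$.
\end{itemize}
Since $(\gamma,t)$ is $k$-excluded, $K_4^{(3)}$ is $\PP'$-colorable. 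Composing that coloring with $\psi$ gives an assignment of colors of $\PP$ to the pairs of $K_4^{(3)}$, with the same vertex ordering, in which every edge receives a triple of $T$. Hence $K_4^{(3)}$ is $\PP$-colorable, contradicting the first property.

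No step here is a genuine obstacle; the argument is essentially bookkeeping. The one point to check carefully is that the quantity Lemma~\ref{lm:avg} controls, with unit weights, coincides with the count in the definition of a $k$-excluded motif. The danger is double counting a triple with two colors of $X$, or one containing a repeated color. Such triples are not counted, since $Y$ is disjoint from $X$ and the palette is simple, so the two quantities agree.
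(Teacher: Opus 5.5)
Your proof is correct and follows the same route as the paper. You assume containment via $\varphi$, apply Lemma~\ref{lm:avg} with $k_0=|\gamma|$, $w\equiv 1$ and $X=\varphi(\gamma)$ to obtain $Y$, and note that the restriction of $\PP$ to $X\cup Y$ satisfies the definition of a $k$-excluded motif, which forces $K_4^{(3)}$ to be $\PP$-colorable. Your explicit relabeling $\psi$ and your check that the unit-weight sum counts exactly the triples with one color in $X$ and two in $Y$ make precise what the paper leaves implicit.
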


\begin{proof}
Fix a simple palette $\PP=(\Gamma,T)$ with the properties described in the statement of the lemma.
Let $(\gamma,t)$ be a $k$-excluded motif, $k\ge 2$.
Assume for contradiction that
the palette $\PP$ contains the motif $(\gamma,t)$,
i.e., there exists an injective map $\varphi:\gamma\to\Gamma$ such that
$\varphi(x)\varphi(y)\varphi(z)\in T$ for every $xyz\in t$.
Apply Lemma~\ref{lm:avg} with $k_0=|\gamma|$, $k$, $w$ constantly equal to one, $\PP$ and $X=\varphi(\gamma)$
to get a $k$-element subset $Y\subseteq\Gamma\setminus X$ with the properties described in the statement of Lemma~\ref{lm:avg}.
In particular, the palette $\PP$ restricted to the colors $X\cup Y$
has the properties given in the definition of a $k$-excluded motif,
which implies that $K_4^{(3)}$ is $\PP$-colorable contrary to our assumption.
\end{proof}

The following lemma has been verified using Gurobi Optimizer (version 11.0.3)~\cite{gurobi} and
the code is available as an ancillary file on arXiv.

\begin{lemma}
\label{lm:excl-E}
The motif $(\gamma,t)$ where $\gamma=\{x,y,z\}$ and $t=\{xyz,yzx,zxy\}$ is $4$-excluded.
\end{lemma}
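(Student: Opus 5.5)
We prove the statement by a finite computer search, recast as an integer program in the same way as the verification described in the paper.

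\textbf{Setup.} Fix $\gamma=\{x,y,z\}$ and add four new colors $Y=\{y_1,y_2,y_3,y_4\}$, so $\Gamma=\gamma\cup Y$ has seven colors. For every ordered triple of three distinct colors of $\Gamma$ (there are $7\cdot6\cdot5=210$), introduce a binary variable $\tau_{abc}$ indicating whether $abc\in T$. We then impose three groups of constraints.
\begin{itemize}
\item We fix $\tau_{xyz}=\tau_{yzx}=\tau_{zxy}=1$, so that $t\subseteq T$.
\item Let $Q$ be the set of ordered triples with exactly one color in $\gamma$ and two in $Y$; there are $3\cdot 6\cdot 3=54$ of them. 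We require the strict counting condition $\sum_{abc\in Q}\tau_{abc}\ge 3\cdot 3\cdot 4\cdot 3/2+1=55$. This is integral, and $54$ is the threshold $3|\gamma|k(k-1)/2$.
\item We forbid $K_4^{(3)}$. A $\PP$-coloring of $K_4^{(3)}$ is given by colors $u,v,w,x',y',z'$ in $\Gamma$ with $ux'v,uz'y',x'z'w,vy'w\in T$. Since $\PP$ is simple, each of these four triples has distinct entries. For every such $6$-tuple we add the no-good constraint that the sum of the four corresponding variables is at most $3$.
\end{itemize}
The motif is $4$-excluded if and only if this integer program is infeasible. Symmetry breaking reduces the work:
\begin{itemize}
\item Permuting $Y$ leaves the problem invariant.
\item The motif $t$ is invariant under the cyclic rotation $x\to y\to z\to x$.
\item Reversal of all triples combined with a transposition of $\gamma$ maps $t$ to itself.
\end{itemize}
So we may impose lexicographic orderings on the $Y$-colors, for example by their degrees into $\gamma$.

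\textbf{Expected mechanism.} Before solving, we would precompute the short local obstructions that explain infeasibility, and add them as valid cuts.
\begin{itemize}
\item The hexagon constraint of Figure~\ref{fig:hex-} and Lemmas~\ref{lm:redblue} and~\ref{lm:leftright} forbid each unordered triple from carrying too much weight.
\item Average weight above $3$ on $Q$ means some pair $\{y_i,y_j\}$ has, summed over the three colors of $\gamma$, more than $9$ of its $18$ triples. Thus some triple $\{c,y_i,y_j\}$ with $c\in\gamma$ has type A or B, or types C, D, E appear heavily.
\item Each such heavy edge has both a left/top swap and a right/top swap, and these interact with the cyclic triples $xyz,yzx,zxy$, which have no swap at all.
\end{itemize}
The intended contradiction is a $K_4^{(3)}$ coloring that uses one triple of $t$ and three triples through $y_i,y_j$. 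A likely example is $u=x$, $x'=y$, $v=z$, with the remaining three colors drawn from $\{y_i,y_j\}$ and $\gamma$.

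\textbf{Main obstacle.} The hard step is that $k=4$ is presumably needed, and with $k=2$ or $3$ a counterexample exists. The contradiction therefore comes from global averaging over the six pairs of $Y$, not from one pair. The cleanest way to certify this is the solver itself, ideally run in exact arithmetic or with an exported infeasibility certificate. If a human-readable argument is wanted, we would first try to classify which pairs $\{y_i,y_j\}$ can have weight $\ge 10$ relative to $\gamma$ without creating a $K_4^{(3)}$. We would then show that two such heavy pairs sharing a vertex of $Y$ conflict, which forces the average over all pairs to be at most $3$.
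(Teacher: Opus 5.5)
Your proposal matches the paper's proof. The paper also proves the lemma only by a Gurobi check that this finite integer program is infeasible: simple palettes on $|\gamma|+4=7$ colors containing the cyclic motif, with more than $54$ triples meeting $\gamma$ exactly once, and no $\PP$-coloring of $K_4^{(3)}$.

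There are two small corrections. First, $|Q|=3\cdot 3\cdot 12=108$, not $54$: the threshold $54$ is half of $|Q|$, i.e., average weight $3$ over the $18$ unordered triples, and with $|Q|=54$ your counting constraint would be vacuously infeasible. Second, your optional human-readable sketch does not close as stated: one pair of weight $10$ with five pairs of weight $9$ already gives $55>54$, even though no two heavy pairs share a vertex.
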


\subsection{Main reduction technique}

We now present a more involved notion of an excluded motif.
A motif $(\gamma,t)$ is \emph{$(k,w)$-excluded}
for an integer $k\ge 2$ and a weight function $w:\gamma\to\RR^+$
if $K_4^{(3)}$ is $\PP$-colorable
by any simple palette $\PP=(\Gamma,T)$ such that
$\gamma\subseteq\Gamma$, $|\Gamma|=|\gamma|+k$, $t\subseteq T$,
the hypergraph $\HH(\PP)$ has no edge of type B or E, and
\begin{align*}
\sum_{x\in\gamma}w(x)\Big(
  & \left|T\cap\left(\{x\}\times \left(\Gamma\setminus\gamma\right)^2\right)\right| +
  \left|T\cap\left( \left(\Gamma\setminus\gamma\right)\times\{x\}\times \left(\Gamma\setminus\gamma\right)\right)\right| +\\
  & \left|T\cap\left( \left(\Gamma\setminus\gamma\right)^2\times\{x\}\right)\right|\Big)
  >\frac{3}{2}\left|\Gamma\setminus\gamma\right|\left(\left|\Gamma\setminus\gamma\right|-1\right)\sum_{x\in\gamma}w(x).
\end{align*}			   
In other words,
the weighted average of edges of $\HH(\PP)$ containing one color from $\gamma$ and two colors from $\Gamma\setminus\gamma$ is larger than 3.
Again, a key fact for our arguments is the following:
checking whether a motif $(\gamma,t)$ is $(k,w)$-excluded can be done computationally (when $k$ and $w$ are fixed)
since there are only finitely many palettes $\PP$ with the properties given in the definition.
We would also like to point out the following subtle property:
in principle,
a $(k,w)$-excluded motif where $w$ is the constant function equal to one need not be $k$-excluded
since the definition of an $(k,w)$-excluded motif assumes the absence of edges of type B and E.

While the notion is formally weaker, the analogue of Lemma~\ref{lm:excl} still holds.

\begin{lemma}
\label{lm:excl-BE}
Let $(\gamma,t)$ be a $(k,w)$-excluded motif for some $k\ge 2$ and $w:\gamma\to\RR^+$.
Every simple palette $\PP=(\Gamma,T)$ such that
\begin{itemize}
\item $K_4^{(3)}$ is not $\PP$-colorable,
\item $\PP$ has at least $\max\{|\gamma|+k,7\}$ colors, i.e., $|\Gamma|\ge\max\{|\gamma|+k,7\}$, and
\item each color is contained in more than $3|\Gamma|^2/2+6\max\{|\gamma|-1,2\}|\Gamma|$ feasible triples,
\end{itemize}
does not contain the motif $(\gamma,t)$.
\end{lemma}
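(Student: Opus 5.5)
We follow the proof of Lemma~\ref{lm:excl}, with one extra ingredient: we must first show that $\HH(\PP)$ has no edges of type B or E, since the definition of a $(k,w)$-excluded motif only applies to palettes without such edges.

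\textbf{Step 1: no edges of type B or E.} The motif $(\{x,y,z\},\{xyz,yzx,zxy\})$ is $4$-excluded by Lemma~\ref{lm:excl-E}. We apply Lemma~\ref{lm:excl} to it with $|\gamma|=3$ and $k=4$. Its hypotheses hold by our assumptions.
\begin{itemize}
\item $|\Gamma|\ge 7=|\gamma|+k$.
\item Each color lies in more than $3|\Gamma|^2/2+6\cdot 2|\Gamma|=3|\Gamma|^2/2+6(|\gamma|-1)|\Gamma|$ feasible triples.
\end{itemize}
This is exactly why the statement has the maxima with $7$ and with $2$. We conclude that $\PP$ contains no cyclic triple configuration $xyz,yzx,zxy$.

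Next we check against Figure~\ref{fig:hex10} that every edge of type E contains such a cyclic configuration, and that every edge of type B also does (B and E are related by the left/right reflection, and the motif is closed under isomorphism). This is the step where care is needed: the paper's overview asserts it ("it follows that the $3$-vertex hypergraph formed by a single edge of type B is also excluded"). We would verify it by reading off the highlighted hexagon vertices. The three cyclic rotations $xyz,yzx,zxy$ correspond to alternate vertices of the hexagon, or to the analogous set for the reversed orientation. Type E is the unique configuration with three pairwise non-adjacent vertices, and B contains one of these patterns together with an extra vertex. If B instead contains the reflected cyclic pattern $zyx,yxz,xzy$, we apply Lemma~\ref{lm:excl-E} to the isomorphic motif obtained by swapping left and right. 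That motif is $4$-excluded as well, because $K_4^{(3)}$-colorability is invariant under reversing all triples: reverse the vertex order of $K_4$.

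\textbf{Step 2: the weighted averaging.} Suppose for contradiction that $\varphi:\gamma\to\Gamma$ embeds $(\gamma,t)$ into $\PP$. We apply Lemma~\ref{lm:avg} with $k_0=|\gamma|$, this $k$, the weight $w$ transported via $\varphi$, and $X=\varphi(\gamma)$. The degree hypothesis of Lemma~\ref{lm:avg} requires more than $3|\Gamma|^2/2+6(|\gamma|-1)|\Gamma|$ triples per color, which is implied by our assumption, and $|\Gamma|\ge|\gamma|+k$. This yields a $k$-set $Y\subseteq\Gamma\setminus X$ satisfying the weighted inequality with $|Y|=k=|\Gamma'\setminus\gamma|$.

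Restrict $\PP$ to $X\cup Y$ and identify $X$ with $\gamma$ via $\varphi$. The resulting palette is simple, has $|\gamma|+k$ colors, and contains $t$. Its hypergraph has no B or E edges, since these are induced sub-configurations of $\HH(\PP)$ and Step 1 applies. It also satisfies the weighted inequality in the definition. Hence $(k,w)$-exclusion makes $K_4^{(3)}$ colorable by this subpalette, and therefore by $\PP$, a contradiction.

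The main obstacle is the Step 1 bookkeeping: confirming from the hexagon types that B and E both contain a cyclic triple pattern, up to isomorphism, so that Lemma~\ref{lm:excl-E} rules them out. The rest is a direct repeat of the proof of Lemma~\ref{lm:excl}.
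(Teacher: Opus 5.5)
Your proof is correct and follows the paper's argument: you first rule out edges of type B and E via Lemma~\ref{lm:excl} applied to the $4$-excluded motif of Lemma~\ref{lm:excl-E} (which is exactly what the bounds $\max\{|\gamma|+k,7\}$ and $\max\{|\gamma|-1,2\}$ are for), then apply Lemma~\ref{lm:avg} with weight $w$ and invoke $(k,w)$-exclusion on the restriction to $X\cup Y$. One small remark: your parenthetical that B and E are related by the left/right reflection is wrong (B has four triples, E three), but your argument does not use it, since B is an alternate triple of hexagon vertices plus one more vertex; also the reversed pattern $\{yxz,zyx,xzy\}$ is already obtained from $\{xyz,yzx,zxy\}$ by swapping $x$ and $y$, so the injective-map definition of containment covers it without any reflection argument.
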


\begin{proof}
Fix a simple palette $\PP=(\Gamma,T)$ with the properties described in the statement of the lemma.
Let $(\gamma,t)$ be a $(k,w)$-excluded motif.
Assume for contradiction that
the palette $\PP$ contains the motif $(\gamma,t)$,
i.e., there exists an injective map $\varphi:\gamma\to\Gamma$ such that
$\varphi(x)\varphi(y)\varphi(z)\in T$ for every $xyz\in t$.
Apply Lemma~\ref{lm:avg} with $k_0=|\gamma|$, $k$, $w$, $\PP$ and $X=\varphi(\gamma)$
to get a $k$-element subset $Y\subseteq\Gamma\setminus X$ with the properties described in the statement of Lemma~\ref{lm:avg}.
Since the hypergraph $\HH(\PP)$ does not contain an edge of type B or E by Lemmas~\ref{lm:excl} and~\ref{lm:excl-E},
the restriction of the palette $\PP$ to the colors $X\cup Y$
has the properties given in the definition of a $(k,w)$-excluded motif.
This implies that $K_4^{(3)}$ is $\PP$-colorable contrary to our assumption.
\end{proof}

We verified the following lemma using Gurobi Optimizer (version 11.0.3)~\cite{gurobi} and
the code is available as an ancillary file on arXiv.

\begin{lemma}
\label{lm:Fano}
Let $(\gamma,t)$ be a motif with $|\gamma|=7$ such that
the edges of $\HH(\gamma,t)$ are of types A, C, D and J only and
the edges of the types A, C and D form the Fano plane.
If $K_4^{(3)}$ is not $(\gamma,t)$-colorable,
then $(\gamma,t)$ is isomorphic to one of the motifs depicted in Figure~\ref{fig:Fano}.
\end{lemma}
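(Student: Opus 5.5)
This is a finite statement, so the plan is an exhaustive computer search organised to keep it small. We first fix a labelled Fano plane on $\gamma=\{1,\dots,7\}$ with lines $L_1,\dots,L_7$. Any two points of the Fano plane lie on a common line, so every triple of $\gamma$ is either a line or a non-line. Non-lines are of type J, so they carry no feasible triples. Each line must be of type A, C or D, and these types have weights $4$, $3$ and $3$. Up to the symmetries of the hexagon, a type-A line therefore has a small number of concrete realizations as a set of ordered triples. The same holds for types C and D. Listing these explicitly from Figure~\ref{fig:hex10} gives, for each line, a finite menu of candidate sets of feasible triples on its three colors. A motif in the statement is then just one choice from each of the seven menus.

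The key requirement is that $K_4^{(3)}$ is not $(\gamma,t)$-colorable. By the description preceding Figure~\ref{fig:K4}, this means no six colors $u,v,w,x,y,z\in\gamma$ make $uxv,uzy,xzw,vyw$ all feasible. Since the motif is simple and all feasible triples sit on lines, each of the four triples lies on a single line. This makes it cheap to encode the problem as a constraint satisfaction problem, in the same way the other computational lemmas were handled with Gurobi. We introduce a Boolean variable for each ordered triple on each line and a constraint that each line realizes a type in $\{A,C,D\}$. For every potential witness $(u,v,w,x,y,z)$ we add the clause that not all four of its triples are present. We then enumerate all feasible solutions, iteratively adding no-good cuts, or alternatively use a direct backtracking search. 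Local obstructions prune the search: Lemma~\ref{lm:redblue}, Lemma~\ref{lm:leftright}, and the forbidden hexagon pattern of Figure~\ref{fig:hex-}. They apply whenever two lines share a pair, and in the Fano plane every pair lies on exactly one line, while each point lies on three lines.

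Finally, we reduce the list of solutions modulo isomorphism. The relevant group is generated by $\mathrm{PGL}(3,2)$ acting on the points (order $168$) together with the left/right reversal $xyz\mapsto zyx$. A canonical form is computed, for example the lexicographically minimal encoding over the $336$ group elements. The resulting orbit representatives are then compared with the motifs of Figure~\ref{fig:Fano}. As a sanity check, we verify by the same witness search that each depicted motif indeed does not color $K_4^{(3)}$.

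The main obstacle is the size of the raw search. With roughly a dozen realizations per line, naive enumeration is about $12^7$ assignments, each checked against about $7^6$ witness tuples. This is feasible but crude, and we would reduce it in two ways. First, we precompute which pairs of line-realizations sharing a point can already combine into a witness; a witness uses four triples pairwise sharing a color, so it involves at most four lines meeting in a configuration of the Fano plane. Second, we break symmetry by fixing the realization on $L_1$ up to its stabilizer. The delicate part is ensuring that the isomorphism classification is correct and complete, since an error there would silently drop a motif. To guard against it, we would cross-check the orbit count with Burnside's lemma, comparing the number of raw solutions with the sum of the orbit sizes.
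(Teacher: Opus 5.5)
Your proposal is correct and follows the same route as the paper, which also establishes Lemma~\ref{lm:Fano} only by a computer verification (with Gurobi). Your formulation as a constraint problem over the seven lines, followed by reduction modulo $\mathrm{PGL}(3,2)$ and left/right reversal, is a sound way to carry this out, with two harmless slips: each line has $24$ admissible realizations ($6$ of type A and $6+12$ of types C and D), not about a dozen, and Lemmas~\ref{lm:redblue} and~\ref{lm:leftright} never apply inside this motif because distinct Fano lines share no pair, so apart from the single-line hexagon pattern (already excluded by the types) the only $K_4^{(3)}$-witnesses are those using the four lines that avoid a fixed point.
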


\begin{figure}
\begin{center}
\epsfbox{turank4-9.mps}
\end{center}
\caption{Visualization of the two motifs from Lemma~\ref{lm:Fano}.
         The notation for the colors forming the Fano plane is visualized at the top and
	 each of the two motifs is presented on one of the two lines below the Fano plane
	 using the notation from Figure~\ref{fig:hex0}.}
\label{fig:Fano}
\end{figure}

The following lemma has been verified using Gurobi Optimizer (version 11.0.3)~\cite{gurobi} and
the code is available as an ancillary file on arXiv.

\begin{lemma}
\label{lm:excl-Fano}
The top motif depicted in Figure~\ref{fig:Fano} is $(4,w)$-excluded
where $w$ is the constant function equal to one, and
the bottom motif depicted in Figure~\ref{fig:Fano} is $(4,w)$-excluded
where $w$ is the function defined as follows:
\[w(0)=0.5, w(1)=1.5, w(2)=1, w(3)=1, w(4)=1, w(5)=2.5\mbox{ and }w(6)=1.\]
\end{lemma}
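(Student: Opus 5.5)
The statement is a finite claim about two fixed motifs, each with $|\gamma|=7$, extended by $k=4$ new colors. I would prove it by exhaustive search, organised as an infeasibility check for an integer program, in the same way as Lemma~\ref{lm:excl-E}.

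Let $\Gamma=\gamma\cup\{y_1,\dots,y_4\}$. For each ordered triple of distinct colors $(p,q,r)$ with at least one $y_i$, introduce a binary variable $s_{pqr}$ indicating $pqr\in T$. Triples inside $\gamma$ are fixed by $t$; the motif data fix which triples are present there. Since the motif is not $K_4^{(3)}$-colorable by Lemma~\ref{lm:Fano}, there is no loss in letting $T\cap\gamma^3$ range over supersets of $t$. The simplest choice is to keep these inner triples as variables too, constrained to contain $t$; alternatively one can fix them to $t$ and argue monotonicity.

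The constraints encode the definition of a $(4,w)$-excluded motif.
\begin{itemize}
\item \emph{No $K_4^{(3)}$-coloring.} For every choice of (not necessarily distinct) colors $u,v,w,x,y,z$ with $uxv,uzy,xzw,vyw$ all possible triples, add $s_{uxv}+s_{uzy}+s_{xzw}+s_{vyw}\le 3$. Triples already in $t$ contribute constant $1$. Simplicity kills any pattern with a repeated color inside one triple.
\item \emph{No edges of type B or E.} For each unordered triple, forbid the corresponding hexagon patterns; a linear constraint per pattern suffices. The hexagon patterns forbidding two opposite consecutive pairs are implied by the previous constraint, but adding them explicitly helps the solver.
\item \emph{Weighted degree condition.} Require $\sum_{x\in\gamma}w(x)\deg_Y(x) \ge \tfrac32\cdot 4\cdot 3\sum_x w(x)+\epsilon$. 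Here $\deg_Y(x)$ is the number of feasible triples with one color $x$ and two colors from $Y$. The weights are half-integers, so multiply by $2$ and use a strict-to-nonstrict integer rounding.
\end{itemize}
The lemma is then exactly the statement that this program is infeasible. I would run it once per motif: $w\equiv 1$ for the top motif, and the given $w$ for the bottom one.

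To shrink the search, I would quotient by the symmetry group of $Y$ (permuting $y_1,\dots,y_4$) and by the left–right reversal when the motif is reversal-invariant, using lexicographic symmetry-breaking constraints. One can also break the problem into cases by the type (A through J, minus B and E) of each of the $\binom42=6$ triples inside $Y$ and the $84$ mixed edges. Per-edge type enumeration is naturally handled by the solver's branching, so I would not do it by hand.

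The main obstacle is certifying infeasibility reliably. The number of $K_4$ constraints is large, since six colors range over 11, and the weighted degree inequality is only barely violated in near-extremal configurations, so LP relaxations are weak. I would first try Gurobi directly with the lazy-constraint approach: add only the $K_4$ patterns violated by incumbent solutions. If the run is too slow, I would split by the multiset of types of the six $Y$-internal triples, and for each case exploit Lemmas~\ref{lm:redblue} and~\ref{lm:leftright} as extra cuts. Finally, I would cross-check infeasibility with an exact-arithmetic or independent SAT solver to guard against floating-point tolerance issues, since the strictness of the weighted inequality matters.
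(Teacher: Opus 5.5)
Your proposal takes essentially the same approach as the paper: the paper gives no human-readable argument and instead certifies this finite claim by an integer-programming infeasibility check in Gurobi (code in the arXiv ancillary files), and your encoding is a faithful IP for the definition of a $(4,w)$-excluded motif — no $K_4^{(3)}$-coloring, no B/E edges, and the strict weighted inequality with threshold $18\sum_x w(x)$, i.e.\ $126$ or $153$. One small slip, which only affects your optional case split: $Y$ spans $\binom{4}{3}=4$ unordered triples, not $6$, and there are $126$ mixed edges ($42$ with one color of $\gamma$ and two of $Y$, plus $84$ with two of $\gamma$ and one of $Y$), not $84$.
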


\subsection{Conclusion of section}

We summarize the findings of this section in the next lemma.

\begin{lemma}
\label{lm:excl-main}
Let $\PP=(\Gamma,T)$ be a simple palette such that
\begin{itemize}
\item $K_4^{(3)}$ is not $\PP$-colorable,
\item $\PP$ has at least $11$ colors, and
\item each color is contained in more than $3|\Gamma|^2/2+36|\Gamma|$ feasible triples.
\end{itemize}
The hypergraph $\HH(\PP)$ has no edge of type B or E and
the spanning subhypergraph of $\HH(\PP)$ consisting of all edges of type A, C and D
does not contain the Fano plane as a subhypergraph.
\end{lemma}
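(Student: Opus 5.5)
The plan is to combine the exclusion lemmas proved so far; the only work is checking that their numerical hypotheses are met and handling the symmetries.

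\textbf{Types B and E.} Let $(\gamma,t)$ be the motif of Lemma~\ref{lm:excl-E}, with $|\gamma|=3$. By that lemma it is $4$-excluded, so we apply Lemma~\ref{lm:excl} with $k=4$. Its hypotheses hold:
\begin{itemize}
\item $|\Gamma|\ge 11\ge 3+4$;
\item each color lies in more than $3|\Gamma|^2/2+36|\Gamma|\ge 3|\Gamma|^2/2+6\cdot 2\,|\Gamma|$ feasible triples.
\end{itemize}
Hence $\PP$ does not contain $(\gamma,t)$.

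An edge of type E, read off the hexagon of Figure~\ref{fig:hex10}, contains the three triples $xyz$, $yzx$, $zxy$ for a suitable labelling of its colors. So an edge of type E would make $\PP$ contain $(\gamma,t)$, which is impossible.

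For type B, I would check from Figure~\ref{fig:hex10} that its four highlighted hexagon vertices contain three forming this cyclic pattern, possibly after relabelling or reversing left and right. Reversal is harmless: it maps $K_4^{(3)}$-colorability to itself, since reordering the vertices of $K_4$ in reverse swaps left and right colors. This is how the overview deduces the B case from the E case. Consequently $\HH(\PP)$ has no edge of type B or E.

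\textbf{The Fano plane.} Suppose the A, C, D edges of $\HH(\PP)$ contained a Fano plane on colors $\gamma\subseteq\Gamma$, $|\gamma|=7$. Let $t$ be the restriction of $T$ to triples with all colors in $\gamma$. Then every edge of $\HH(\gamma,t)$ is of type A, C or D (the Fano edges) or of some other type. To reach a motif covered by Lemma~\ref{lm:Fano}, whose non-Fano edges are of type J, I would drop the triples of the non-Fano triples of colors. This yields a sub-motif $(\gamma,t')$ that is still contained in $\PP$, has its Fano edges unchanged, and has all other edges of type J.

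Since $K_4^{(3)}$ is not $\PP$-colorable, it is not $(\gamma,t')$-colorable. Lemma~\ref{lm:Fano} then says $(\gamma,t')$ is isomorphic to one of the two motifs in Figure~\ref{fig:Fano}. By Lemma~\ref{lm:excl-Fano}, each of these is $(4,w)$-excluded for the stated $w$.

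We now apply Lemma~\ref{lm:excl-BE} with $k=4$ to get a contradiction. Its hypotheses hold:
\begin{itemize}
\item $|\Gamma|\ge 11=\max\{7+4,7\}$;
\item each color lies in more than $3|\Gamma|^2/2+36|\Gamma|=3|\Gamma|^2/2+6\max\{6,2\}|\Gamma|$ triples, exactly the required bound.
\end{itemize}
Lemma~\ref{lm:excl-BE} uses the absence of B and E edges, established in the first step, which is why that step must come first.

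\textbf{Isomorphism invariance.} One subtlety is that isomorphism may include the left–right reversal. If $\PP$ contains a motif isomorphic via reversal to an excluded one, then the reversed palette contains the excluded motif itself. The reversed palette has the same per-color triple counts, the same non-colorability of $K_4^{(3)}$, and the same absence of B and E edges, since these types are closed under reflection of the hexagon. So the contradiction transfers.

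\textbf{Main obstacle.} The delicate points are confirming from Figure~\ref{fig:hex10} that type B genuinely contains the E-motif up to isomorphism, and justifying the reversal-invariance just described. Everything else is a direct check of constants.
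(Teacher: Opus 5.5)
Your proof is correct and follows the paper's argument exactly: Lemmas~\ref{lm:excl} and~\ref{lm:excl-E} rule out edges of types B and E, and Lemmas~\ref{lm:excl-BE}, \ref{lm:Fano} and~\ref{lm:excl-Fano} rule out the Fano plane. Your additional details are all correct and are points the paper leaves implicit: the check of the constants, discarding the triples on non-Fano triples of colors so that Lemma~\ref{lm:Fano} applies, and the treatment of isomorphisms involving the left--right reversal.
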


\begin{proof}
Fix a palette $\PP=(\Gamma,T)$ as in the statement of the lemma.
The hypergraph $\HH(\PP)$ has no edge of type B or E by Lemmas~\ref{lm:excl} and~\ref{lm:excl-E} (note that
the motif described in the statement of Lemma~\ref{lm:excl-E} is contained in each edge of type B and E).
Similarly, Lemmas~\ref{lm:excl-BE}, \ref{lm:Fano} and~\ref{lm:excl-Fano} imply that
the spanning subhypergraph of $\HH(\PP)$ consisting of all edges of type A, C and D
does not contain the Fano plane as a subhypergraph.
\end{proof}

\section{Final estimate}
\label{sec:flag}

In this section, we discuss the proof of the inequality \eqref{eq:flag} from Section~\ref{sec:overview}.
To prove the inequality \eqref{eq:flag}, we need to establish that some additional motifs are excluded.
As in the case of Lemmas~\ref{lm:excl-E} and~\ref{lm:excl-Fano},
the following lemma has been verified using Gurobi Optimizer (version 11.0.3)~\cite{gurobi}, and
the code is available as an ancillary file on arXiv.
Note that each motif $(\gamma,t)$ such that
$K_4^{(3)}$ is not $(\gamma,t)$-colorable and
the hypergraph $\HH(\gamma,t)$ is one of the four hypergraphs depicted in Figure~\ref{fig:AIJ}
is isomorphic to one of the eight motifs depicted in Figure~\ref{fig:AIJ0}.

\begin{figure}
\begin{center}
\epsfbox{turank4-11.mps}
\end{center}
\caption{The eight excluded motifs from Lemma~\ref{lm:excl-AIJ}.
         The thickness of the segments determines the following:
         thin segments correspond to pairs where the left and the top colors are swapped in the edge of type A and
	 thick segments correspond to pairs where the right and the top colors are swapped.
	 Note that if an edge is of type A, then the thin and thick segments uniquely determine
	 which four triples of the three colors forming the edge are feasible.
         Finally, the top edges of the first three motifs in the first row and
	 the right edge in the last motif in the first row have type I and
	 their single feasible triple is the one displayed in the figure.
         The numbering of vertices is used in Table~\ref{tab:AIJ} to define weight functions.}
\label{fig:AIJ0}
\end{figure}

\begin{lemma}
\label{lm:excl-AIJ}
Let $(\gamma,t)$ be one of the eight motifs given in Figure~\ref{fig:AIJ0}.
The motif $(\gamma,t)$ is $(k,w)$-excluded where the values of $k$ and $w$ are given in Table~\ref{tab:AIJ}.
\end{lemma}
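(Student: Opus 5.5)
This lemma is a finite statement, so I would verify it by exhaustive search, in the same way as Lemmas~\ref{lm:excl-E} and~\ref{lm:excl-Fano}. Fix one of the eight motifs $(\gamma,t)$ from Figure~\ref{fig:AIJ0} together with the $k$ and $w$ from Table~\ref{tab:AIJ}. Set $\Gamma=\gamma\cup Y$ with $|Y|=k$. The unknowns are the indicator variables $\tau_{xyz}\in\{0,1\}$, one for each ordered triple $xyz$ of three distinct colors of $\Gamma$ that is not already forced by $t$. Since the motif fixes the triples inside $\gamma$ only up to containment, triples inside $\gamma$ that are not in $t$ also stay as variables. The task is to show that the following integer program is infeasible:
\[
\sum_{x\in\gamma}w(x)\sum_{\text{$xyz$ with one color $x$ and two colors in $Y$}}\tau \;>\;\frac32 k(k-1)\sum_{x\in\gamma}w(x).
\]
To make the strict inequality usable by the solver, I would use integrality: when the weights are rational, scale them to integers and replace ``$>$'' by ``$\ge$ target$+1$''.

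The remaining constraints are linear and come in three groups.
\begin{itemize}
\item \textbf{No copy of $K_4^{(3)}$.} For every 6-tuple of not necessarily distinct colors $(u,v,w',x,y,z)$ with $uxv,uzy,xzw',vyw'$ all of three distinct colors, require $\tau_{uxv}+\tau_{uzy}+\tau_{xzw'}+\tau_{vyw'}\le 3$. This is exactly the characterization of $\PP$-colorability stated in Section~\ref{sec:palette}.
\item \textbf{No edges of type B or E.} By the hexagon analysis, this amounts to forbidding the cyclic motif $\{xyz,yzx,zxy\}$ (and its left--right mirror) on every triple. I would encode it as $\tau_{xyz}+\tau_{yzx}+\tau_{zxy}\le 2$ for all ordered cyclic classes, since every edge of type B or E contains the motif of Lemma~\ref{lm:excl-E}.
\item \textbf{Motif containment.} Set $\tau=1$ for every triple in $t$.
\end{itemize}
Infeasibility then says exactly that the motif is $(k,w)$-excluded.

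To cut down the search, I would exploit the symmetric group on $Y$ by adding lexicographic symmetry-breaking constraints on the $Y$-colors. I would also note that Lemma~\ref{lm:leftright} and Lemma~\ref{lm:redblue} are already implied by the $K_4^{(3)}$ constraints, so they need not be added; they can still be added as valid cuts to help the solver.

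The main obstacle is computational. With $|\gamma|$ up to about $5$ and $k$ around $4$, there are roughly $9\cdot8\cdot7$ variables and many thousands of $K_4$ constraints. For some motifs, a uniform weight does not give infeasibility at all, and this is why the tailored weights of Table~\ref{tab:AIJ} are needed. If the solver fails to prove infeasibility with the given data, my first remedy would be to reexamine $w$. I would do this by solving the LP dual over weight vectors: search for $w$ such that every feasible completion has weighted degree at most $3$. Otherwise I would increase $k$, using the monotonicity in $k$ noted for $k$-excluded motifs, and accept a larger run.

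As a sanity check on correctness, for each motif I would also confirm that dropping the weighted-degree constraint yields a feasible solution. This ensures the encoding of the $K_4^{(3)}$ constraints is not vacuously contradictory.
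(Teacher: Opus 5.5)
Your proposal is correct and follows essentially the paper's approach: the paper establishes this lemma only by a finite computer search (Gurobi, with the code in the ancillary files), and your integer program encodes exactly that search — no simple $K_4^{(3)}$-free palette on $|\gamma|+k$ colors with $t\subseteq T$, no edge of type B or E, and weighted degree above the threshold. One point of justification: you may replace ``no edge of type B or E'' by your cyclic-class constraints because, among the ten types, only B and E contain a full alternating triple of the hexagon (type A is four consecutive vertices and types C and D are non-alternating), and that is the direction you need, not merely the fact that B and E contain the motif of Lemma~\ref{lm:excl-E}.
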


\begin{table}
\begin{center}
\begin{tabular}{|c|ccccc|}
\hline
 $k$ & $w(0)$ & $w(1)$ & $w(2)$ & $w(3)$ & $w(4)$ \\
\hline
 $5$ & $1$ & $1$ & $1.25$ & $1$ & \\
 $4$ & $1$ & $1$ & $2$ & $1$ & \\
 $5$ & $1$ & $1$ & $1.25$ & $1$ & \\
\hline
 $5$ & $1$ & $1$ & $1.25$ & $1.25$ & $1.25$ \\
\hline
 $4$ & $1$ & $1$ & $1$ & $1$ & $1$ \\
 $4$ & $1.5$ & $0.8$ & $1.8$ & $1.65$ & $1$ \\
\hline
 $4$ & $1$ & $1$ & $1$ & $1$ & $1$ \\
 $4$ & $1$ & $1$ & $2$ & $2$ & $2$ \\
\hline 
\end{tabular}
\end{center}
\caption{The values of $k$ and $w$ from the statement of Lemma~\ref{lm:excl-AIJ}.
         The motifs are listed in the same order as in Figure~\ref{fig:AIJ0} and
	 they are horizontally grouped based on their correspondence to the hypergraphs depicted in Figure~\ref{fig:AIJ}.
	 The numbering of vertices is given in Figure~\ref{fig:AIJ0}.}
\label{tab:AIJ}
\end{table}

We now associate a simple palette such that $K_4^{(3)}$ is not $\PP$-colorable
with a complete $3$-uniform hypergraph $\HH'(\PP)$ as in Section~\ref{sec:overview}.
The hypergraph $\HH'(\PP)$ is obtained from $\HH(\PP)$
by relabelling any edge of type C, D, F, G, H or I to the type~$\star$ and
also relabelling any edge of type J with at least one blue pair to the type~$\star$.
Lemmas~\ref{lm:excl-BE} and~\ref{lm:excl-AIJ} yield the following.

\begin{figure}
\begin{center}
\epsfbox{turank4-10.mps}
\end{center}
\caption{Subgraphs that cannot appear in the hypergraph $\HH'(\PP)$
         when a palette $\PP$ has the properties given in Lemma~\ref{lm:palette}.
	 In the right most subgraph, the types are specified
	 for the three triangles formed by the bottom two vertices and one of the top vertices and
         the triangle formed by the top three vertices (which is visualized by having all segments drawn thick).}
\label{fig:excl-AK}
\end{figure}

\begin{lemma}
\label{lm:excl-AK}
Let $\PP=(\Gamma,T)$ be a simple palette such that
\begin{itemize}
\item $K_4^{(3)}$ is not $\PP$-colorable,
\item $\PP$ has at least $10$ colors, and
\item each color is contained in more than $3|\Gamma|^2/2+36|\Gamma|$ feasible triples.
\end{itemize}
The hypergraph $\HH'(\PP)$ does not contain any of the four hypergraphs depicted in Figure~\ref{fig:excl-AK}.
\end{lemma}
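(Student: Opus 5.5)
The plan is to argue by contradiction, exactly as in the proofs of Lemmas~\ref{lm:excl} and~\ref{lm:excl-BE}. Suppose $\HH'(\PP)$ contains one of the four hypergraphs depicted in Figure~\ref{fig:excl-AK}. We pull this occurrence back to $\HH(\PP)$ and then to the palette $\PP$. We will show that it forces $\PP$ to contain one of the eight motifs of Figure~\ref{fig:AIJ0}. This contradicts Lemma~\ref{lm:excl-BE} combined with Lemma~\ref{lm:excl-AIJ}.

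First we check the hypotheses of Lemma~\ref{lm:excl-BE} for each of the eight motifs. Every motif has $|\gamma|\le 5$ and $k\le 5$, so $\max\{|\gamma|+k,7\}\le 10$, which matches the assumed number of colors. Also $6\max\{|\gamma|-1,2\}\le 24\le 36$, so the degree assumption $3|\Gamma|^2/2+36|\Gamma|$ suffices. Hence Lemma~\ref{lm:excl-BE}, applied with the pairs $(k,w)$ from Table~\ref{tab:AIJ}, shows that $\PP$ contains none of the eight motifs.

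The main step is the translation. Each hypergraph of Figure~\ref{fig:excl-AK} consists of edges of types A, J and $\star$, together with red/blue colorings of some pairs. We argue that any occurrence in $\HH'(\PP)$ yields an occurrence in $\HH(\PP)$ of one of the four hypergraphs of Figure~\ref{fig:AIJ}, with the same pair colors. Three facts do the work:
\begin{itemize}
\item An edge of type J in $\HH'(\PP)$ is a J edge of $\HH(\PP)$ with all three pairs red.
\item A blue pair is, by definition, a leg of some edge of type A. By Lemma~\ref{lm:redblue} it is never a base, so the colors of A-edges in $\HH'$ faithfully encode legs and bases.
\item A $\star$ edge is either an edge of type C, D, F, G, H, I, or a J edge with a blue pair. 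Types B and E are absent by Lemma~\ref{lm:excl-main}.
\end{itemize}
A blue pair lying in some edge guarantees an A edge elsewhere through that pair. Where the pictures in Figure~\ref{fig:AIJ} require such an auxiliary A edge, we take a witness color for it. Note that the motifs of Figure~\ref{fig:AIJ0} have up to five vertices, so they may include such witness colors. This is why the number of colors $10$ is needed.

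Next, every feasible assignment of ordered triples realizing the relevant hypergraph in $\HH(\PP)$ without coloring $K_4^{(3)}$ is isomorphic to one of the eight motifs of Figure~\ref{fig:AIJ0}. This is the remark made before Lemma~\ref{lm:excl-AIJ}. Since only the specified triples of a motif need to be present, $\PP$ contains that motif, which gives the contradiction.

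The main obstacle is the case analysis matching each configuration of Figure~\ref{fig:excl-AK} to the four hypergraphs of Figure~\ref{fig:AIJ}. This applies especially to the rightmost configuration, where several edges are $\star$, and to the case where a witness A edge might coincide with colors already present. For coincident witnesses, I would first try to use the fact that a coincidence creates an edge of type A on a triple whose type is already fixed to be J or I, which is impossible. If that fails, I would show that the coincidence directly gives a $\PP$-coloring of $K_4^{(3)}$ via Lemmas~\ref{lm:redblue} and~\ref{lm:leftright}.
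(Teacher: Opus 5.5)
Your plan is the paper's proof. The paper also rules out the eight motifs via Lemmas~\ref{lm:excl-BE} and~\ref{lm:excl-AIJ} (your hypothesis check is correct), and the case analysis you postpone is short. Lemma~\ref{lm:leftright} fixes that a shared leg or base is the same kind of swap in both edges of type A. Then two A edges with common leg $xy$ and bases $xz,xz'$ force $zz'$ red (two left motifs of the bottom row of Figure~\ref{fig:AIJ0}) and no feasible triple on $\{x,z,z'\}$ (three left motifs of the top row). A edges with common base $xy$ force their apexes to be pairwise red (two right motifs of the bottom row), and any three such apexes span no feasible triple (rightmost motif of the top row). Two small points: your concern about coincident witnesses is settled by Lemma~\ref{lm:redblue} alone, since a witness equal to $x$ or $y$ would make some pair both a leg and a base; and the absence of edges of type B and E should be cited from Lemmas~\ref{lm:excl} and~\ref{lm:excl-E}, not Lemma~\ref{lm:excl-main}, which assumes $11$ colors.
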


\begin{proof}
Fix a simple palette $\PP=(\Gamma,T)$ with the properties given in the statement of the lemma.
Lemmas~\ref{lm:excl-BE} and~\ref{lm:excl-AIJ} imply that 
$\PP$ contains none of the eight motifs given in Figure~\ref{fig:AIJ0}.
Recall that Lemma~\ref{lm:leftright} implies that
there is no pair of colors $x$ and $y$ such that
there are feasible triples that differ by swapping $x$ and $y$ as the left and top colors and
there are feasible triples that differ by swapping $x$ and $y$ as the right and top colors.
In particular, if two edges of type A of $\HH'(\PP)$ share a leg,
then this leg is \textbf{either} a left-top swap in both edges \textbf{or} a right-top swap in both edges, and
if they share the base,
then the base is \textbf{either} a left-top swap in both edges \textbf{or} a right-top swap in both edges.

Consider two edges $\{x,y,z\}$ and $\{x,y,z'\}$ of type A of $\HH'(\PP)$ such that
the pairs $xz$ and $xz'$ are their bases, i.e., they are red.
By symmetry, we can assume that the pair $xy$ is a left-top swap in both edges.
The pair $zz'$ must be red as $\PP$ contains neither of the two left motifs in the bottom row in Figure~\ref{fig:AIJ0}.
Since $\PP$ contains neither of the three left motifs in the top row of Figure~\ref{fig:AIJ0},
there is no feasible triple containing the colors $x$, $z$ and $z'$.
It follows that the edge $\{x,z,z'\}$ is an edge of type J in $\HH'(\PP)$.
In particular, $\HH'(\PP)$ does not contain either of the first two hypergraphs depicted in Figure~\ref{fig:excl-AK}.

Next consider two edges $\{x,y,z\}$ and $\{x,y,z'\}$ of type A in $\HH'(\PP)$ such that
the pair $xy$ is the base in both of them.
By symmetry, we can assume that the pair $xy$ is a right-top swap in both edges.
The pair $zz'$ cannot be blue as $\PP$ contains neither of the two right motifs in the bottom row in Figure~\ref{fig:AIJ0}.
Hence, $\HH'(\PP)$ does not contain the third hypergraph depicted in Figure~\ref{fig:excl-AK}.
In addition, consider another edge $\{x,y,z''\}$ of type A; note that the pair $xy$ must be its base.
Since $\HH'(\PP)$ does not contain the third hypergraph depicted in Figure~\ref{fig:excl-AK},
all pairs $zz'$, $zz''$ and $z'z''$ are red.
Finally, since $\PP$ does not contain the rightmost motif in the top row of Figure~\ref{fig:AIJ0},
there is no feasible triple containing the colors $z$, $z'$ and $z''$ and so 
the edge $\{z,z',z''\}$ is an edge of type J in $\HH'(\PP)$.
In particular, $\HH'(\PP)$ does not contain the fourth hypergraph depicted in Figure~\ref{fig:excl-AK}.
\end{proof}

We say that a complete $3$-uniform hypergraph $H$ is \emph{consistent} if
\begin{itemize}
\item each edge of $H$ is of one the three types A, J and $\star$,
\item each pair of vertices of $H$ is colored with blue or red,
\item each edge of type A has one red and two blue pairs,
\item all three pairs in each edge of type J are red, and
\item $H$ does not contain any of the four hypergraphs depicted in Figure~\ref{fig:excl-AK}.
\end{itemize}
Note that if the hypergraph $\HH'(\PP)$ arises from a palette $\PP$ that satisfies the assumptions of Lemma~\ref{lm:excl-AK},
then the hypergraph $\HH'(\PP)$ is consistent.
The following lemma was proven using the flag algebra method;
the certificate along with a code to verify it is available as an ancillary file on arXiv.

\begin{lemma}
\label{lm:flag}
For every $\varepsilon>0$, there exists $n_0$ such that
every $n$-vertex complete $3$-uniform consistent hypergraph $H$ with $n\ge n_0$
satisfies that
\[m_A\le\left(\frac{1}{4}+\varepsilon\right)\binom{n}{3}+2m_J\]
where $m_A$ is the number of edges of type A and $m_J$ is the number of edges of type J in $H$.
\end{lemma}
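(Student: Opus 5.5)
We prove Lemma~\ref{lm:flag} with Razborov's flag algebra method. The approach is a semidefinite programming certificate for the linear inequality
\[
\frac{m_A}{\binom{n}{3}} - 2\,\frac{m_J}{\binom{n}{3}} \le \frac14 + o(1)
\]
in the theory of consistent hypergraphs. The objects of the theory are complete $3$-uniform hypergraphs whose pairs are colored red or blue and whose triples are labelled A, J or $\star$. The constraints are that an edge of type A has exactly one red pair, an edge of type J has only red pairs, and none of the four configurations of Figure~\ref{fig:excl-AK} occurs. Consequently each triple is one of the six types of Figure~\ref{fig:auxHH}, and since the forbidden configurations are induced conditions, consistency is a hereditary property. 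This makes it a legitimate universal theory for flag algebras. We then work in the limit $n\to\infty$ and interpret $m_A/\binom n3$ and $m_J/\binom n3$ as the densities of the corresponding one-edge $3$-vertex models.

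The key steps are as follows.

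\begin{enumerate}
\item Fix a small order $\ell$; we would try $\ell=5$ first and $\ell=6$ if needed. Enumerate all consistent models on $\ell$ vertices up to isomorphism. This is a finite computer enumeration over pair colorings and triple labels, filtered by the constraints above.
\item Express the target $a-2j$ as a linear combination of $\ell$-vertex model densities, using the averaging identity that writes a $3$-vertex density as an average over induced $\ell$-vertex subgraphs.
\item Choose types $\sigma$ on $\ell-2$ (or fewer) vertices with the same parity convention, and form the corresponding flag vectors of $\sigma$-flags on $(\ell+|\sigma|)/2$ vertices. The products of such flags, unlabelled and averaged, again expand into $\ell$-vertex densities.
\item Search by SDP for positive semidefinite matrices $Q_\sigma$ such that, for every $\ell$-vertex consistent model $F$, the coefficient of $F$ in
\[
\frac14-(a-2j)-\sum_\sigma \bigl[\![ v_\sigma^{T}Q_\sigma v_\sigma ]\!\bigr]_\sigma
\]
is nonnegative. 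Since every density is nonnegative and the flag squares have nonnegative limits, this yields the bound up to the $o(1)$ terms.
\item Make the certificate exact: round the numerical $Q_\sigma$ to rationals, project so that the tight constraints remain satisfied, and verify positive semidefiniteness and all coefficient inequalities in exact arithmetic. This verification is what the ancillary code checks.
\item Convert the asymptotic statement to the finite form claimed. The lemma states, for each $\varepsilon>0$, an $n_0$. This follows by the standard compactness argument: the flag algebra inequality holds for all convergent sequences of consistent hypergraphs, and the finite-$n$ error in the averaging identities is $O(1/n)$.
\end{enumerate}

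The main obstacle is obtaining a certificate that is actually tight at $1/4$. The value $1/4$ should be attained by a limit structure, such as a R\"odl-type configuration restricted to A-edges with density exactly $1/4$ and no J-edges. So the SDP must be sharp, and numerical slack has to be removed carefully. We would first identify the extremal construction or constructions, add its zero-density models as equality constraints, and force the corresponding eigenvectors into the kernels of the $Q_\sigma$ before rounding. If $\ell=5$ is not sharp, we would pass to $\ell=6$. In that case we would reduce the number of types by symmetry, for instance swapping left and right as in the isomorphism notion for motifs, to keep the SDP tractable.
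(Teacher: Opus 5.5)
Your plan matches the paper, which proves this lemma by a computer-verified flag algebra certificate in the hereditary theory of consistent hypergraphs. One correction to your sharpness heuristic: the bound is attained by R\"odl's construction itself (two equal classes, blue pairs across and red pairs inside, so mixed triples have type A and the others type J, giving $a=3/4$ and $j=1/4$). A limit with $a=1/4$ and no J-edges cannot exist, because by the last two configurations of Figure~\ref{fig:excl-AK} any three A-edges sharing a base force a J-edge on their other three vertices, so $a>0$ already forces $j>0$; it is R\"odl's construction whose zero-density constraints you should impose before rounding.
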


\section{Main result}
\label{sec:main}

We are now ready to prove Theorem~\ref{thm:main}, which we restate here for convenience.

\thmmain*

\begin{proof}
Suppose for contradiction that the uniform Tur\'an density of $K_4^{(3)}$ is strictly larger than $1/2$ and
let $n_0$ and $d>1/2$ be as given by Lemma~\ref{lm:palette}.
Set $\varepsilon=d-1/2$.
Since the Tur\'an density of the Fano plane is $3/4$~\cite{DecF00,KeeS05},
there exists $n_1$ such that any $n$-vertex $3$-uniform hypergraph $H$ with $n\ge n_1$ that
does not contain the Fano plane as a subhypergraph
has at most $\left(\frac{3}{4}+\varepsilon\right)\binom{n}{3}$ edges.
Further, let $n_2$ be the integer $n_0$ from Lemma~\ref{lm:flag} applied with $\varepsilon$.

Set $n=\max\{n_0,n_1,n_2,11\}$ and let $\PP$ be a simple palette with $n$ colors
with the properties given in Lemma~\ref{lm:palette}.
Let $m_A,\ldots,m_J$ be the number of edges of types A, \dots, J of the hypergraph $\HH(\PP)$.
Since each color is contained in at least $3dn^2+36n$ feasible triples,
the palette $\PP$ has at least $dn^3+12n^2$ feasible triples and so it holds that
\begin{equation}
4(m_A+m_B)+3(m_C+m_D+m_E)+2(m_F+m_G+m_H)+m_I\ge dn^3+12n^2\ge 6d\binom{n}{3}.
\label{eq:A-J}
\end{equation}
By Lemma~\ref{lm:excl-main},
the hypergraph $\HH(\PP)$ has no edge of type B or E and so $m_B=m_E=0$,
which combines with \eqref{eq:A-J} to
\begin{equation}
4m_A+3(m_C+m_D)+2(m_F+m_G+m_H)+m_I\ge 6d\binom{n}{3}.
\label{eq:ACDFHIJ}
\end{equation}
In addition, Lemma~\ref{lm:excl-main} yields that
the spanning subhypergraph of $\HH(\PP)$ consisting of all edges of type A, C and D
does not contain the Fano plane as a subhypergraph,
which implies that
\begin{equation}
m_A+m_C+m_D\le \left(\frac{3}{4}+\varepsilon\right)\binom{n}{3}.
\label{eq:ACD}
\end{equation}
Finally, we obtain from Lemma~\ref{lm:flag} applied to $\HH'(\PP)$ (note that each edge of type J in $\HH'(\PP)$
has type J in $\HH(\PP)$) the following:
\begin{equation}
m_A-2m_J\le \left(\frac{1}{4}+\varepsilon\right)\binom{n}{3}.
\label{eq:AJ}
\end{equation}
Since each edge of $\HH(\PP)$ is one of the types A, C, D, F, G, H, I and J,
we obtain that
\begin{equation}
m_A+m_C+m_D+m_F+m_G+m_H+m_I+m_J=\binom{n}{3}.
\label{eq:ACDEFHIJsum}
\end{equation}
We obtain the following as the sum twice \eqref{eq:ACDEFHIJsum}, \eqref{eq:ACD} and \eqref{eq:AJ}:
\[
4m_A+3(m_C+m_D)+2(m_F+m_G+m_H)+2m_I\le \left(3+2\varepsilon\right)\binom{n}{3},
\]
which contradicts \eqref{eq:ACDFHIJ} as $m_I\ge 0$ and $6d=3+6\varepsilon$.
\end{proof}

\section*{Acknowledgements}

None of the arguments presented in this paper was obtained with AI assistance.
For full transparency, we disclose that when we expected the inequality \eqref{eq:flag} to hold,
we prompted ChatGPT-5.6 and received a flag algebra proof of the inequality
assuming that two left hypergraphs depicted in Figure~\ref{fig:AIJ} are excluded.
We then identified additional excluded hypergraphs and
obtained our own flag algebra proof of the inequality;
our flag algebra computation also involves a smaller number of configurations.

The authors would like to thank Haoran Luo for pointing out a flaw in the statement of Lemma~\ref{lm:avg},
which resulted from discretizing the limit version of our argument for the purpose of presenting it in the paper, and
for bringing to our attention that we incorrectly copied one of the weights from our code to the statement of Lemma~\ref{lm:excl-Fano}.

The same result was obtained in~\cite{Buc26}.
The second author of this manuscript shared with the author of~\cite{Buc26} on August 18, 2026, that
we had obtained a computer-assisted proof of the result that does not contain AI-assisted arguments.
The author of~\cite{Buc26} disclosed that he had also been working independently on the problem and
expressed his already existing intention to feed his strategies into an LLM. ChatGPT-5.6,
which he subsequently used, was unable to complete the proof of the result.
On September 6, 2026, two days after its release, ChatGPT-6 managed to complete the proof,
using an additional prompt added to the previous conversation with ChatGPT-5.6.
On September 8, 2026, after verifying and simplifying the argument,
the author of~\cite{Buc26} informed us about having obtained an alternative computer-assisted proof and
shared a draft with us---at that point, our manuscript had already been submitted to arXiv,
but had not yet become publicly available or been shared with the author of~\cite{Buc26}.
The two proofs use different approaches.

A statement that verbatimly mirrors the previous paragraph also appears in~\cite{Buc26}.


\bibliographystyle{bibstyle}
\bibliography{turank4}
\end{document}